\renewcommand{\tilde} [1] {{{#1}^{\mathrm{r}}}}
\theoremstyle{definition}
\newtheorem{defn}{Definition}
\newtheorem{rem}{Remark}
\newtheorem{ex}{Example}
\theoremstyle{plain}
\newtheorem{thm}{Theorem}
\newtheorem{lem}{Lemma}
\newtheorem{prop}{Proposition}
\newtheorem{cor}{Corollary}
\newcommand{\bRR}{\overline{\mathbb{R}}}
\newcommand{\RR}{\mathbb{R}}
\newcommand{\marphi}{\phi}
\newcommand{\marpsi}{\psi}
\newcommand{\marcop}{C^{\mathrm{M}}}
\newcommand{\marsetcop}{{\mathcal C^{\mathrm{M}}}}
\newcommand{\mmphi}{\phi}
\newcommand{\mmpsi}{\chi}
\newcommand{\mmcop}{{C^{\mathrm{MM}}}}
\newcommand{\mmsetcop}{{\mathcal C^{\mathrm{MM}}}}
\def\lpr{\underline P}
\def\upr{\overline P}
\newcommand{\low}[1]{{\underline{#1}}}
\newcommand{\up}[1]{{\overline{#1}}}
\begin{document}

\title{Constructing copulas from shock models with imprecise distributions}
\author{Matja\v{z} Omladi\v{c}\\Institute of Mathematics, Physics and Mechanics, Ljubljana, Slovenia\\matjaz@omladic.net
\and Damjan \v{S}kulj\\Faculty of Social Sciences, University of Ljubljana, Slovenia\\damjan.skulj@fdv.uni-lj.si}

\maketitle

\begin{abstract}
The omnipotence of copulas when modeling dependence given marg\-inal distributions in a multivariate stochastic situation is assured by the Sklar's theorem. Montes et al.\ (2015) suggest the notion of what they call an \emph{imprecise copula} that brings some of its power in bivariate case to the imprecise setting. When there is imprecision about the marginals, one can model the available information by means of $p$-boxes, that are pairs of ordered distribution functions. By analogy they introduce pairs of bivariate functions satisfying certain conditions. In this paper we introduce the imprecise versions of some classes of copulas emerging from shock models that are important in applications. The so obtained pairs of functions are not only imprecise copulas but satisfy an even stronger condition. The fact that this condition really is stronger is shown in Omladi\v{c} and Stopar (2019) thus raising the importance of our results. The main technical difficulty in developing our imprecise copulas lies in introducing an appropriate stochastic order on these bivariate objects.
\end{abstract}

\smallskip\noindent
{\bfseries Keywords.} Marshall's copula, maxmin copula, $p$-box, imprecise probability, shock model

\section{Introduction}
In this paper we propose copulas arising from shock models in the presence of probabilistic uncertainty, which means that probability distributions are not necessarily precisely known. Copulas have been introduced in the precise setting by A.\ Sklar \cite{Skla}, who considered copulas as functions satisfying certain conditions -- in bivariate case they are functions of two variables $C(u,v)$. They can be defined equivalently as joint distribution functions of random vectors with uniform marginals. He proved a two-way theorem: Firstly, given random variables $X$ and $Y$ with respective marginal distributions $F$ and $G$ and a copula $C$, the function $C(F(x),G(y))$ is a joint distribution of a random vector $(X,Y)$ having distributions $F$ and $G$ as its marginals. Secondly, given a random vector $(X,Y)$ with joint distribution $H(x,y)$ there exists a copula $C(u,v)$ such that $H(x,y)=C(F(x),G(x))$, where $F$ and $G$ are the marginal distribution functions of the respective random variables $X$ and $Y$.

There are various reasons for imprecision, such as scarcity of available information, costs connected to acquiring precise inputs or even inherent uncertainty related to phenomena under consideration. Ignoring imprecision may lead to deceptive conclusions and consequentially to harmful decisions, especially if the conclusions are backed by seemingly precise outputs. The theories of imprecise probabilities that have been developed in recent decades aim at providing methods whose results would faithfully reflect the imprecision of input information. The probabilistic imprecision is quite often described with sets of possible probability distributions, consistent with the available information, instead of a single precise distribution. The sets are represented by various types of constraints, ranging from the most general lower and upper previsions to more specific lower and upper probabilities, $p$-boxes, belief and possibility functions, and other models.

In recent years, methods of imprecise probabilities \cite{augustin2014introduction} have been applied to various areas of probabilistic modelling, such as stochastic processes \cite{decooman-2008-a, skulj:09IJAR}, game theory \cite{MirandaMontes18, Nau2011}, reliability theory \cite{Coolen2004, Oberguggenberger2009, utkin2007, Yu2016}, decision theory \cite{Jansen2018, Montes2014, Troffaes2007}, financial risk theory \cite{Pelessoni2003, Vicig2008} and others. However, it is only recently that imprecise models involving copulas have been proposed.
\citet{montes2015} introduce the concept of an imprecise copula and connect it to the theory of bivariate $p$-boxes \cite{pelessoni2016} (an extension of univariate $p$-boxes \cite{Ferson2003,Troffaes2001}). A version of Sklar's theorem, actually the first part of it (in the order explained in the first paragraph of this introduction) is stated there. 
It follows from their results that the Sklar's theorem is not fully valid in their approach to the imprecise case and in this paper we give some more evidence of this fact. Based on their work we propose an imprecise version of two important families of copulas induced by shock models. So, the reader is assumed familiar with the work \cite{montes2015} and general theory of imprecise probabilities. However, only recently, a new approach appeared \cite{OmSt2} that produces a more general Sklar's type theorem. The distinction between the two approaches is fundamental; nevertheless it does not effect our work.

Another application of the theory of copulas to models of imprecise probabilities has been proposed by Schmelzer~\cite{Schmelzer2015, Schmelzer2018} where copulas are used to describe the dependence between random sets. As a matter of fact, it turns out that sets of copulas are needed to describe such dependence instead of a single copula. Moreover, he also shows in \cite{Schmelzer2015b} that in the special case of minitive belief functions Sklar's theorem actually holds, which means that a single copula is sufficient to express the dependence relation.

In this paper we restrict ourselves to two families of shock-model induced copulas, Marshal's copulas and maxmin copulas, both only in the bivariate case. These copulas are caused by shock models, i.e.\ they arise naturally as models of joint distributions for random variables representing lifetimes of components affected by shocks. Two types of shocks are considered in these models, the first type only affects each one of the two components (the idiosyncratic shocks), while the second one simultaneously affects both components (the exogenous shock). In the original Marshall's case (cf.\ \cite{marshall1996} based on an earlier work of Marshall and Olkin \cite{MaOl}) both types of shocks cause the component to cease to work immediately. Recently a new family of copulas has been proposed by Omladi\v{c} and Ru\v{z}i\'{c} \cite{omladic2016} where the exogenous (i.e.\ systemic) shock has a detrimental effect on one of the components and beneficial effect on the other one.

So, in the precise probability setting one assumes two components whose lifetimes are random variables denoted by $U$ and $V$ respectively. They may be affected by three shocks whose occurrence times are denoted by $X, Y$, and $Z$. The first two shocks affect only the first and the second component respectively, while the third shock affects simultaneously both components. In the case of Marshall's copulas, each of the three shocks is fatal for the corresponding component meaning that it causes the component to stop operating. Thus, the lifetimes of both components are equal to
\begin{equation}\label{intro:eq:marshall}
  U = \min\{ X, Z \}\ \ \mbox{and}\ \ V = \min\{ Y, Z \}.
\end{equation}
The maxmin copulas arise from a similar underlying model of shocks, only that the exogenous shock affects the first component in a beneficial way and the second one in a detrimental way, so that
\begin{equation}\label{intro:eq:maxmin}
  U = \max\{ X, Z \}\ \ \mbox{and}\ \ V = \min\{ Y, Z \}.
\end{equation}
If the respective distribution functions of $X$ and $Y$ are denoted by $F$ and $G$, then by the Sklar's theorem the joint distribution function of random vector $(X,Y)$ is $C(F, G)$, where $C$ is the bivariate Marshall's copula in the first case and the bivariate maxmin copula in the second case.

The history of copulas, starting with the already mentioned paper of A.\ Sklar in 1959 \cite{Skla}, has been too rich to list here all of the references. Let us limit ourselves to more recent papers related to our investigation, including those dealing with order statistics \cite{AnKoTa,AvGeKo,JaRy,MeSa,NaSp,Schm}, those introducing new classes of copulas \cite{DuJa,JwBaDeMe14,JwBaDeMe15,RoLaUbFl}, the ones connected to non-additive measures and perturbations \cite{DuFeSaUbFl,DuMePaSe, KlLiMePa, KlMeSpSt,MeKoKo} and some of those devoted to other important subjects \cite{BeBoCiSaPlSa,DuJa,FrNe, GeNe}; the monographs in the area include \cite{DuSe,Joe, Nels}. The history of shock models induced copulas started with the seminal paper by Marshall and Olkin in 1967 \cite{MaOl}, as already mentioned, although they have not worked with copulas yet: they just gave the formulas for joint distributions of the model in the case of exponential marginals. Nevertheless, in the half of the century since then it has become the most cited paper of this theory. A.\ W.\ Marshall in 1996 was the first one to combine this idea with the theory of copulas and found the famous formula for general marginals that bares his name. A substantial theory on shock model copulas has developed in the meantime including \cite{ChDuMu,ChMu, DuKoMeSe,Hu,LiMcNe,Mu}. The third milestone on the path of shock model induced copulas was made in 2015 by F.\ Durante, S.\ Girard and G.\ Mazo \cite{DuGiMa,DuGiMa1} who describe a general  construction principle for copulas based on shock models. In \cite{DuGiMa} a new possibility in the research of shock models induced copulas was introduced, i.e. what they call asymmetric linkage functions (such as ``max'' and ``min'' in \eqref{intro:eq:maxmin}) as opposed to symmetric ones in Marshall's copulas (such as ``min'' and ``min'' in \eqref{intro:eq:marshall}). One might call these copulas ``non-Marshall'' shock model copulas. The first ones of the kind seem to have been introduced in 2016 \cite{omladic2016} followed by \cite{DuOmOrRu,KoOm, KoBuKoMoOm1,KoBuKoMoOm2}. It is perhaps somewhat surprising that the first citations in engineering papers of this kind of copulas appeared only two years after the first appearance of the model itself \cite{LiSoZh,LiZhSo}.

A comprehensive list of references of concrete applications of shock-based copulas would be too long to present here, so let us limit ourselves to four of them, relatively recent ones and in quite different fields. An application in actuarial sciences is given in \cite{LiMcNe}, in life sciences in \cite{Hu}, both on a relatively general level, while a true data application in finance and banking is given in \cite{ChMu}, and in hydrology in \cite{DuOk}. True data applications in copula theory depend also on having access to the appropriate data and means to process them.

The main contribution of this paper is a proposal of the imprecise versions of the two shock model based copulas described above. Our approach is by no means the first application of imprecise probabilities to reliability theory. For an exhaustive review on imprecise reliability, the reader is referred to \cite{utkin2007}. In \cite{Troffaes2013}, an approach to common-cause failure models with imprecise probabilities is proposed. It is based on an idea that is somewhat similar to our shock models. Thus, in addition to individual failure rates of components, common rates that cause simultaneous failure for groups of components are included. Their work then focuses on the estimation of the underlying parameters, which differs from our more theoretical approach of modelling dependence between failure times.

In order to achieve our goal, we need to introduce an order on the pairs of functions generating these copulas, denoted by $\marphi,\marpsi$ in the Marshall's case, and by $\mmphi,\mmpsi$ in the maxmin case. This tool is developed in Section 4, in Subsection 4.1 for the first pair, in Subsection 4.2 for the second one. We find a nontrivial way to define an order on the generating functions induced by the order on $p$-boxes of the endogenous shocks. The fact that the second ``linkage'' of the maxmin copulas (i.e. the ``min'') reverses the order in the $p$-box of the corresponding shock, has surprising consequences on the obtained imprecise copula (viz.\ Remark 4 at the end of Section 5). The so obtained bivariate $p$-box cannot be well represented solely by the infimum and supremum of its elements as one might expect.

It turns out that the imprecise copulas emerging from our efforts satisfy not only the definition proposed in \citet{montes2015} but also the following stronger condition: An imprecise copula is coherent if
\[
\underline{C}=\inf\{C\colon C\ \mbox{is a copula s.t.}\ \underline{C}\le C\le\overline{C}\}
\]
and
\[
\overline{C}=\sup\{C\colon C\ \mbox{is a copula s.t.}\ \underline{C}\le C\le\overline{C}\}.
\]
We refer the interested reader to Omladi\v{c} and Stopar \cite{OmSt} (cf.\ also \cite{OmSt2}) for more details on this condition and the proof that it really is stronger. Namely, they give an example of an imprecise copula $(\underline{C},\overline{C})$ according to the definition of \cite{montes2015} such that the set $\{C\colon C\ \mbox{is a copula s.t.}\ \underline{C}\le C\le\overline{C}\}$ is empty. In the light of this fact our results are gaining an additional value.

The paper is organized as follows. Section 2 brings the preliminaries on copulas and on the imprecise setting. Section 3 presents an overview of Marshall's copulas and maxmin copulas.  Section 4 develops the main tools needed in the paper and Section 5 stages the main results. We give a detailed description on the imprecise copulas obtained from the shock models with imprecise endogenous shocks for both Marshall's type of shock models (Theorem \ref{main:marshall}) and for the maxmin type ot these models (Theorem \ref{main:maxmin}).

\section{Imprecise distribution functions and copulas}
\subsection{Copulas}
Copulas present a very convenient tool for modeling dependence of random variables free of their marginal distributions -- only when one inserts these distributions into a copula, they turn into a joint distribution.
\begin{defn}\label{imprecise copula}
	A function $C\colon [0, 1]\times [0, 1]\to [0, 1]$ is called a \emph{copula} if it satisfies the following conditions:\
	\begin{enumerate}[(C1)]
		\item $C(u, 0) = C(0, v) = 0$ for every $u, v \in [0, 1]$;
		\item $C(u, 1) = u$ and $C(1, v) = v$ for every $u, v \in [0, 1]$;
		\item $C(u_2, v_2)-C(u_1, v_2)-C(u_2, v_1)+C(u_1, v_1) \geqslant 0$ for every $0\leqslant u_1\leqslant u_2\leqslant 1$ and $0\leqslant v_1\leqslant v_2\leqslant 1$.
	\end{enumerate}
\end{defn}
\begin{thm}[Sklar's theorem]
	Let $F\colon \bRR\times \bRR\to [0, 1]$, where $\bRR = \RR\cup \{-\infty, \infty\}$, be a bivariate distribution function with marginals $F_X$ and $F_Y$. Then there exists a copula $C$ such that
	\begin{equation}\label{eq-copula-sklar}
	F(x, y) = C(F_X(x), F_Y(y)) \text{ for all } (x,y)\in\overline{\mathbb{R}}\times\overline{\mathbb{R}};
	\end{equation}
	and conversely, given any copula $C$ and a pair of distribution functions $F_X$ and $F_Y$, equation \eqref{eq-copula-sklar} defines a bivariate distribution function.
\end{thm}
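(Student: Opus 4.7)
The plan is to prove the two directions separately, with the converse (copula plus marginals yields a distribution function) being routine and the direct part (extracting a copula from $F$) requiring the construction via quasi-inverses.

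For the \emph{converse direction}, I would define $H(x,y) := C(F_X(x), F_Y(y))$ and verify directly that $H$ is a bivariate distribution function with the prescribed marginals. Properties (C1) and (C2) give the correct limits as $x,y\to -\infty$ or $\to\infty$, in particular $H(x,\infty) = C(F_X(x),1) = F_X(x)$ and symmetrically in $y$, so the marginals come out right. Monotonicity of $F_X,F_Y$ together with the 2-increasing property (C3) of $C$ yields that the $H$-volume of every rectangle is nonnegative, which is the bivariate analog of monotonicity. Right-continuity of $H$ in each variable follows from right-continuity of $F_X,F_Y$ combined with the Lipschitz continuity of $C$ (itself a consequence of (C2) and (C3)), since $|C(u_2,v)-C(u_1,v)|\leqslant |u_2-u_1|$.

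For the \emph{direct direction}, the plan is to build $C$ on the graph of $(F_X,F_Y)$ and then extend. Introduce the quasi-inverses
\begin{equation*}
F_X^{(-1)}(u) = \inf\{x\in\bRR : F_X(x)\geqslant u\},\qquad F_Y^{(-1)}(v) = \inf\{y\in\bRR : F_Y(y)\geqslant v\},
\end{equation*}
and define, for $(u,v)\in\mathrm{Ran}(F_X)\times\mathrm{Ran}(F_Y)$,
\begin{equation*}
C(u,v) := F\bigl(F_X^{(-1)}(u),\, F_Y^{(-1)}(v)\bigr).
\end{equation*}
On this subset one checks the identity $F(x,y)=C(F_X(x),F_Y(y))$ at once using the standard fact that $F_X(F_X^{(-1)}(F_X(x)))=F_X(x)$, and that $F$ is constant on fibres where $F_X$ (resp.\ $F_Y$) is constant, because the $F$-volume of such a degenerate rectangle is zero.

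The \emph{main obstacle} is extending $C$ from $\mathrm{Ran}(F_X)\times\mathrm{Ran}(F_Y)$, which may fail to be all of $[0,1]^2$ when the marginals have jumps, to a copula on the whole unit square while preserving (C3). I would do this by bilinear interpolation across the ``missing'' strips: on each maximal horizontal (resp.\ vertical) interval $[u^-,u^+]$ in $[0,1]\setminus\overline{\mathrm{Ran}(F_X)}$ interpolate linearly in $u$, and analogously in $v$; on rectangles missing in both coordinates one uses the resulting bilinear patch. Standard but careful bookkeeping then shows that (C1) and (C2) persist, and that 2-increasingness on each small rectangle of the refined grid follows from 2-increasingness of $F$ on the corresponding rectangle of $\bRR^2$, together with the fact that linear interpolation preserves the sign of rectangle volumes. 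The Lipschitz bound $|C(u,v_2)-C(u,v_1)|\leqslant v_2-v_1$ (inherited from the marginal property) guarantees that the extension agrees with any limit one might have taken, so the construction is unambiguous where it overlaps.
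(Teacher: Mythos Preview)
The paper does not give a proof of this statement: Sklar's theorem is stated in the preliminaries (Section~2.1) as a classical result, with the original reference \cite{Skla} and standard monographs such as \cite{Nels,DuSe} supplying the proof. There is therefore no ``paper's own proof'' to compare against.

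Your proposal is the standard textbook argument (essentially the one in Nelsen \cite{Nels}): the converse direction is routine verification, and the direct direction constructs $C$ on $\mathrm{Ran}(F_X)\times\mathrm{Ran}(F_Y)$ via quasi-inverses and then extends by bilinear interpolation across the gaps left by jumps of the marginals. This is correct as a sketch. One small contextual remark: you invoke right-continuity of $F_X,F_Y$ for the converse, which is appropriate for the classical statement being quoted here; just be aware that later in the paper (Section~2.2 onward) the authors deliberately drop the cadlag assumption when passing to the imprecise setting, so this part of your argument would need adjustment if one tried to transport it there.
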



\subsection{Coherent lower and upper probabilities, $p$-boxes}
We first introduce briefly the basic concepts and ideas of imprecise probability models. For a detailed treatment, the reader is referred to \cite{augustin2014introduction, walley91}.
Let $\Omega$ be a possibility space, and $\mathcal A$ a collection of its subsets, called \emph{events}.
Usually we assume $\mathcal A$ to be an algebra, but not necessarily a $\sigma$-algebra.

The concept of precise probability on the measurable space $(\Omega, \mathcal A)$ can be generalised by allowing probabilities of events in $\mathcal A$ to be given in terms of intervals $[\low P(A), \up P(A)]$ rather than precise values.
The functions $\low P\leqslant \up P$ are mapping events to their lower and upper probability bounds and are respectively called \emph{lower} and \emph{upper probabilities}\footnote{Lower and upper probabilities are a special case of even more general lower and upper previsions \cite{walley91}.}.
If $\mathcal A$ is an algebra (or at least closed for complements), then the following conjugacy relation between lower and upper probabilities is usually required:
\begin{equation}\label{eq-conjugacy}
	\upr (A) = 1-\lpr(A^c) ~\text{for every } A\in \mathcal A.
\end{equation}
To every pair of lower and upper probabilities $\lpr$ and $\upr$ we can also associate the set
\begin{equation*}
	\mathcal M = \{ P \colon P \text{ is a finitely additive probability on } \mathcal A, \lpr \leqslant P \leqslant \upr \}.
\end{equation*}
It is clear from the above definition that $\lpr \le \upr$ is a necessary condition for the set $\mathcal M$ to be non-empty. 

Another central question regarding a pair of lower and upper probabilities is whether the bounds are pointwise limits of the elements in $\mathcal M$:
\begin{equation*}
	\lpr (A) = \inf_{P\in\mathcal M} P(A), \qquad \upr (A) = \sup_{P\in\mathcal M} P(A) \qquad \text{ for every} A\in \mathcal A.
\end{equation*}
If the above conditions are satisfied, $\lpr$ and $\upr$ are said to be \emph{coherent} lower and upper probabilities respectively.
In the case of coherence, the conjugacy condition \eqref{eq-conjugacy} is automatically fulfilled, which among others means that if a lower probability $\lpr$ is coherent, then it uniquely determines the corresponding upper probability.

A simple characterization of coherence in terms of the properties of $\lpr$ and $\upr$ does not seem to be known in literature. Below we give some necessary conditions. A pair $\lpr$ and $\upr$ of coherent lower and upper probabilities satisfies the following properties:
\begin{enumerate}[(i)]
	\item $0 \leqslant \lpr(A) \leqslant \upr (A) \leqslant 1$ for every $A\in \mathcal A$.
	\item If $A\subseteq B$, then $\lpr (A) \leqslant \lpr (B)$ and $\upr (A) \leqslant \upr (B)$. (Monotonicity)
	\item $\lpr (A\cup B) \geqslant \lpr(A) + \lpr (B).$ (Superadditivity)
\end{enumerate}


Instead of the full structure of probability spaces, we are often concerned only with the distribution functions of specific random variables. The set of relevant events where the probabilities have to be given then shrinks considerably. In the precise case, a single distribution function $F$ describes the distribution of a random variable $X$, which gives the probabilities of the events of the form $\{ X\leqslant x\}$. Thus $F(x) = P(X\leqslant x)$. 	Sometimes we will also consider the corresponding \emph{survival function}, which we will denote by $\hat F(x) = 1-F(x) = P(X > x)$, which is decreasing and positive.

In the imprecise case, the probabilities of the above form are replaced by the corresponding lower (and upper) probabilities, resulting in sets of distribution functions called $p$-boxes \cite{Ferson2003, Troffaes2001}. A \emph{$p$-box} is a pair $(\low F, \up F)$ of distribution functions with $\low F\leqslant \up F$, where $\low F(x) = \lpr(X\leqslant x)$ and $\up F(x) = \upr(X\leqslant x)$.  To every $p$-box we associate the set of all  distribution functions with the values between the bounds:
\begin{equation*}
	\mathcal F_{(\low F, \up F)} = \{ F \colon F \text{ is a distribution function}, \low F \leqslant F \leqslant \up F \}.
\end{equation*}
Clearly, $\mathcal F_{(\low F, \up F)}$ is a convex set of distribution functions. Conversely, since supremum and infimum of any set of distribution functions are themselves distribution functions, every set of distribution functions generates a $p$-box containing the original set.

In the theory of imprecise probabilities, precise probability denotes a probability measure that is finitely additive, and not necessarily $\sigma$-additive, as is the case in most models using classical probabilities. As far as distribution functions and $p$-boxes are concerned, this implies that a distribution function is any increasing, or more precisely non-decreasing function, mapping $\RR$ to $[0, 1]$. This is in contrast with the $\sigma$-additive case, where distribution functions are cadlag, i.e.\ continuous from the right; and their corresponding survival functions are then caglad, i.e.\ continuous from the left. No such continuity assumptions can therefore be made neither for distribution functions nor for the bounds of $p$-boxes.

\subsection{Bivariate $p$-boxes}\label{subsec:p_box}

Here we are interested in joint probability distributions that are modeled by bivariate distribution functions in the imprecise setting. Such modeling is described in \cite{montes2015,pelessoni2016}. The following basic results can be found in the latter reference.
\begin{defn}
	A map $F\colon \bRR\times \bRR \to [0, 1]$ is called \emph{standardized} if
	\begin{enumerate}[(i)]
		\item \emph{it is componentwise increasing: } $F(x_1, y) \leqslant F(x_2, y)$ and $F(x, y_1) \leqslant F(x, y_2)$ whenever $x_1\leqslant x_2$ and $y_1\leqslant y_2$ and for all $x, y\in \bRR$;
		\item $F(-\infty, y) = F(x, -\infty) = 0$ for every $x, y\in \bRR$;
		\item $F(\infty, \infty) = 1$.
	\end{enumerate}	
If in addition,
	\begin{enumerate}[(iv)]
	\item 	$F(x_2, y_2)-F(x_1, y_2)-F(x_2, y_1)+F(x_1, y_1) \geqslant 0$ for every $x_1\leqslant x_2$ and $y_1\leqslant y_2$,
	\end{enumerate}	
	then it is called a \emph{bivariate distribution function}.
	
	 A pair $(\low F, \up F)$ of standardized functions, where $\low F\leqslant \up F$, is called a \emph{bivariate $p$-box}.
\end{defn}
Notice that the bounds of a bivariate $p$-box do not need to be bivariate distribution functions themselves, as a supremum or infimum of bivariate distribution functions does not need to have this property. Further, a bivariate $p$-box is said to be \emph{coherent} if its bounds $\low F$ and $\up F$ are the lower and upper envelopes respectively of the set of bivariate distribution functions
\[
	\mathcal F_{(\low F, \up F)} = \{ F\colon \bRR \times \bRR \to [0, 1], F \text{ is a bivariate distribution function}, \low F \leqslant F \leqslant \up F \}.
\]
Note that the above set could in some cases be empty. In general, there is also no clear characterization of coherent bivariate $p$-boxes in terms of the properties of the bounds. This makes bivariate $p$-boxes substantially different from the univariate ones. Yet, this seems completely normal for imprecise probability models, where the bounds are usually not of the same type as probability distributions they bound.

For every coherent bivariate $p$-box the following conditions hold for every $x_1\leqslant x_2$ and $y_1\leqslant y_2$:
\begin{enumerate}[(i)]
	\item $\low F(x_2, y_2) + \up F(x_1, y_1) - \low F(x_2, y_1) - \low F(x_1, y_2) \geqslant 0$;
	\item $\up F(x_2, y_2) + \low F(x_1, y_1) - \low F(x_2, y_1) - \low F(x_1, y_2) \geqslant 0$;
	\item $\up F(x_2, y_2) + \up F(x_1, y_1) - \up F(x_2, y_1) - \low F(x_1, y_2) \geqslant 0$;
	\item $\up F(x_2, y_2) + \up F(x_1, y_1) - \low F(x_2, y_1) - \up F(x_1, y_2) \geqslant 0$;
\end{enumerate}
Clearly, a pair of bivariate distribution functions $\low F\leqslant \up F$ forms a coherent bivariate $p$-box, as the bounds are reached by $\low F$ and $\up F$ themselves.

\subsection{Imprecise copulas}\label{ss:imprecise}
The extension of copulas to imprecise probabilities in the way we use it in this paper is introduced by \citet{montes2015}, where also a partial generalization of Sklar's theorem is given. An imprecise copula is defined as follows.
\begin{defn}
	A pair $(\low C, \up C)$ of functions $\low C$ and $\up C$ mapping $[0, 1]\times [0, 1]$ to $[0, 1]$ is called an \emph{imprecise copula} if
	\begin{enumerate}[(i)]
		\item $\low C(u, 0) = \low C(0, v) = 0, \low C(1, v) = v, \low C(u, 1) = u$ for every $u, v\in [0, 1]$;
		\item $\up C(u, 0) = \up C(0, v) = 0, \up C(1, v) = v, \up C(u, 1) = u$ for every $u, v\in [0, 1]$;
		\item for every $0\leqslant u_1\leqslant u_2\leqslant 1$ and $0\leqslant v_1\leqslant v_2\leqslant 1$:
		\begin{enumerate}[({IC}1)]
				\item $\low C(u_2, v_2) + \up C(u_1, v_1) - \low C(u_2, v_1) - \low C(u_1, v_2) \geqslant 0$;
				\item $\up C(u_2, v_2) + \low C(u_1, v_1) - \low C(u_2, v_1) - \low C(u_1, v_2) \geqslant 0$;
				\item $\up C(u_2, v_2) + \up C(u_1, v_1) - \up C(u_2, v_1) - \low C(u_1, v_2) \geqslant 0$;
				\item $\up C(u_2, v_2) + \up C(u_1, v_1) - \low C(u_2, v_1) - \up C(u_1, v_2) \geqslant 0$;
		\end{enumerate}
	\end{enumerate}
\end{defn}
It follows from (iii) of the above definition that $\low C \leqslant \up C$, which is an important property used throughout the paper, but might not be obvious from the definition at first sight.

It follows from Definition \ref{imprecise copula} that an imprecise copula is a bivariate $p$-box with the marginals that are both uniformly distributed on the unit interval.
Imprecise copulas seem to be in a close relationship with sets of (precise) copulas. Thus, given a non-empty set of copulas $\mathcal C$, its upper and lower bounds
\begin{align}
	\low C(u, v) & = \inf_{C\in\mathcal C} C(u, v) \label{eq-copula-lower} \\
	\up C(u, v) & = \sup_{C\in \mathcal C} C(u, v) \label{eq-copula-upper}
\end{align}
form an imprecise copula. Conversely, to any imprecise copula $(\low C, \up C)$ a set of copulas
\begin{equation}\label{intermediate}
	\mathcal C = \{ C \colon C\ \mbox{copula s.t.}\ \low C \leqslant C \leqslant \up C \}.
\end{equation}
can be assigned. The authors of \cite{montes2015} propose a question, whether the lower and upper bounds of this set yield respectively $\low C$ and $\up C$ back. A  (nontrivial) counterexample to that question was given in Omladi\v{c} and Stopar \cite{OmSt}. So, we will say (just for the purpose of this paper) that an imprecise copula $(\low C,\up C)$ \emph{is coherent} if for the set $\mathcal{C}$ defined by \eqref{intermediate} relations \eqref{eq-copula-lower} and \eqref{eq-copula-upper} are satisfied. Observe that our definition of coherence on imprecise copulas is analogous to coherence of bivariate $p$-boxes defined in Subsection \ref{subsec:p_box}. We will show in the sequel that all imprecise copulas induced by shock models are automatically coherent.


\subsection{An imprecise version of the Sklar's theorem}
The situation described by Sklar's theorem includes a pair of marginal distributions $F_X$ and $F_Y$ and a copula $C$, together generating a bivariate distribution function $F = C(F_X, F_Y)$. In the imprecise case, the marginals would be replaced by $p$-boxes $(\low F_X, \up F_X)$ and $(\low F_Y, \up F_Y)$ to which an imprecise copula $(\low C, \up C)$ is applied.
\begin{thm}[Imprecise Sklar's theorem ( \cite{montes2015})]
	Let $(\low F_X, \up F_X)$ and $(\low F_Y, \up F_Y)$ be two univariate $p$-boxes and $\mathcal C$ a non-empty set of copulas, with the pointwise lower and upper bounds $(\low C, \up C)$ (see \eqref{eq-copula-lower} and \eqref{eq-copula-upper}) forming an imprecise copula. Then the functions
	\begin{equation}\label{eq-imprecise-sklar}		
		\low F = \low C(\low F_X, \low F_Y) \text{ and }\up F = \up C(\up F_X, \up F_Y)
	\end{equation}
	determine a coherent bivariate $p$-box.
\end{thm}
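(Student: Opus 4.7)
The plan is to split the proof into two parts: (a) verify that $(\underline F, \overline F)$ is a bivariate $p$-box in the sense of Subsection~2.3, i.e.\ that both bounds are standardized and that $\underline F\le \overline F$; and (b) establish coherence by producing, for each point $(x_0,y_0)$ and each $\varepsilon>0$, bivariate distribution functions in $\mathcal F_{(\underline F,\overline F)}$ whose values at $(x_0,y_0)$ approximate $\underline F(x_0,y_0)$ from above and $\overline F(x_0,y_0)$ from below, respectively.

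For (a), monotonicity of $\underline C$ and $\overline C$ in each argument is inherited from the copulas in $\mathcal C$ via \eqref{eq-copula-lower} and \eqref{eq-copula-upper} (a pointwise infimum or supremum of componentwise monotone functions is componentwise monotone). Composition with the monotone univariate bounds yields componentwise monotonicity of $\underline F$ and $\overline F$; the boundary conditions $(-\infty,y),(x,-\infty),(\infty,\infty)$ follow from conditions (i) and (ii) of the imprecise copula definition together with the boundary values of the marginal $p$-boxes. The inequality $\underline F \le \overline F$ follows from $\underline C\le \overline C$ (a consequence of (iii) for any imprecise copula), the inequalities $\underline F_X\le \overline F_X$ and $\underline F_Y\le \overline F_Y$, and the monotonicity of $\overline C$.

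For (b), fix $(x_0,y_0)\in\bRR\times\bRR$ and $\varepsilon>0$, and set $u_0=\underline F_X(x_0)$, $v_0=\underline F_Y(y_0)$. By \eqref{eq-copula-lower} there exists $C_\varepsilon\in\mathcal C$ with $C_\varepsilon(u_0,v_0)<\underline C(u_0,v_0)+\varepsilon$. Since in the finitely additive setting $\underline F_X$ and $\underline F_Y$ are themselves admissible univariate distribution functions, the classical Sklar theorem applied to $C_\varepsilon$ yields a bivariate distribution function
\[
F_\varepsilon(x,y):=C_\varepsilon(\underline F_X(x),\underline F_Y(y)).
\]
For every $(x,y)$, using $C_\varepsilon\in\mathcal C$ and monotonicity of $\overline C$,
\[
\underline F(x,y)\le F_\varepsilon(x,y)\le \overline C(\underline F_X(x),\underline F_Y(y))\le \overline C(\overline F_X(x),\overline F_Y(y))=\overline F(x,y),
\]
so $F_\varepsilon\in\mathcal F_{(\underline F,\overline F)}$ and $F_\varepsilon(x_0,y_0)<\underline F(x_0,y_0)+\varepsilon$. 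A symmetric construction using $\overline F_X,\overline F_Y$ and a copula $C^\varepsilon\in\mathcal C$ approximating $\overline C(\overline F_X(x_0),\overline F_Y(y_0))$ from below yields $F^\varepsilon\in\mathcal F_{(\underline F,\overline F)}$ with $F^\varepsilon(x_0,y_0)>\overline F(x_0,y_0)-\varepsilon$. Taking infima and suprema over the resulting families then gives the pointwise realization of $\underline F$ and $\overline F$, which is exactly the definition of coherence.

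The only subtlety, which is where the lower--with--lower and upper--with--upper pairing in \eqref{eq-imprecise-sklar} becomes essential, is that the approximants $F_\varepsilon$ and $F^\varepsilon$ must lie between $\underline F$ and $\overline F$ \emph{at every} $(x,y)$, not merely at the chosen target point. The chain of inequalities displayed above shows that this global containment is an automatic consequence of the monotonicity of $\overline C$ combined with the marginal bound $\underline F_X\le \overline F_X$, $\underline F_Y\le \overline F_Y$; mixing marginals (for instance using $\underline F_X$ together with $\overline F_Y$) would break this argument. Beyond this point, no further machinery is needed.
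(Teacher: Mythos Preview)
The paper does not supply its own proof of this theorem; it is quoted as a known result from \cite{montes2015}, so there is nothing in the paper to compare your argument against directly.

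Your proof is correct. Part~(a) is routine, and in part~(b) the essential point --- which you identify explicitly --- is that the approximant $F_\varepsilon = C_\varepsilon(\underline F_X,\underline F_Y)$ lies between $\underline F$ and $\overline F$ at \emph{every} $(x,y)$, not just at the target $(x_0,y_0)$. Your chain $\underline C(\underline F_X,\underline F_Y)\le C_\varepsilon(\underline F_X,\underline F_Y)\le \overline C(\underline F_X,\underline F_Y)\le \overline C(\overline F_X,\overline F_Y)$ establishes this cleanly, using only that $\underline C\le C_\varepsilon\le \overline C$ pointwise and that $\overline C$ is componentwise monotone. The symmetric construction for the upper bound works the same way. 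This is essentially the argument one finds in \cite{montes2015}.
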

The converse of the above theorem does not hold in general. That is, let a bivariate $p$-box $(\low F, \up F)$ with the marginals $(\low F_X, \up F_X)$ and $(\low F_Y, \up F_Y)$ be given. There may be no imprecise copula $(\low C, \up C)$ so that \eqref{eq-imprecise-sklar} holds. Some evidence of this fact was given in \cite{montes2015} and some more will be presented in the following sections. However, recently Omladi\v{c} and Stopar \cite{OmSt2} use a slightly different approach to bivariate $p$-boxes they call restricted bivariate $p$-box that enables them to get a much more general Sklar-type theorem in the imprecise setting.

\subsection{Independent random variables}
In the case where probability distributions are known imprecisely, several distinct concepts of independence exist, such as \emph{epistemic irrelevance, epistemic independence} and \emph{strong independence} (see e.g. \cite{Couso2000,Couso2010}). However, as long as $p$-boxes are concerned, all these notions result in the \emph{factorization property}, defined as follows.

In \citet{montes2015}, the following construction is proposed. Let $p$-boxes $(\low F_X, \up F_X)$ and $(\low F_Y, \up F_Y)$ be given, corresponding to random variables $X$ and $Y$. Further, let $\lpr$ be a coherent lower probability\footnote{In the original paper, a coherent lower prevision, which is a more general model, is used.} on the product space of the domains of $X$ and $Y$, that is factorising, which means that a form of independence between $X$ and $Y$ holds. It is also shown that it does not really matter which independence concept is used if only bivariate $p$-boxes are studied, so that  we will simply say in such cases that the random variables under consideration are independent. Then $\lpr$ induces the bivariate $p$-box $(\low F, \up F)$, where $\low F(x, y)  = \low F_X(x) \low F_Y(y)$  and $\up F(x, y) = \up F_X(x) \up F_Y(y)$,
which is coherent and whose associated set of distribution functions $\mathcal F_{(\low F, \up F)}$ contains all product distribution functions $F_XF_Y$, where $\low F_X \leqslant F_X \leqslant \up F_X$ and $\low F_Y \leqslant F_Y \leqslant \up F_Y$. This construction justifies the following definition.
\begin{defn}\label{def-factorization}
	Let a pair of $p$-boxes $(\low F_X, \up F_X)$ and $(\low F_Y, \up F_Y)$ correspond to the distributions of random variables $X$ and $Y$. The bivariate $p$-box $(\low F, \up F)$ is \emph{factorizing} if
	\begin{align*}
	\low F(x, y) & = \low F_X(x) \low F_Y(y) \label{eq-factorizing-p-box-l}\\
	\up F(x, y) & = \up F_X(x) \up F_Y(y).
	\end{align*}
\end{defn}
Thus a bivariate $p$-box corresponding to the bivariate distribution of a pair of independent random variables is factorizing, regardless of the type of independence.

\section{Marshall's copulas and maxmin copulas revisited}\label{ss-mmc}
In this section we describe the two important families of copulas that are used to model the dependence between the pairs of random variables \eqref{intro:eq:marshall} and \eqref{intro:eq:maxmin} described in the introduction. Observe that this section assumes the historical setting of the shock-based copulas which means in particular that all random variables are cadlag. However, from Section \ref{sec:order} on all the cumulative distribution functions will be assumed monotone only.
\subsection{Marshall's copulas}
Copulas of the form
\begin{equation*}
	\marcop_{\marphi, \marpsi}(u, v) = \begin{cases}
	uv\min\left\{ \frac{\marphi(u)}{u}, \frac{\marpsi(v)}{v} \right\}, & uv > 0; \\
	0, & uv = 0,
	\end{cases}
\end{equation*}
where
\begin{enumerate}[(P1)]
	\item $\marphi$ and $\marpsi$ are two non-decreasing real valued maps on $[0, 1]$;
	\item $\marphi(0) = \marpsi(0) = 0$ and $\marphi(1) = \marpsi(1) = 1$; 	
	\item $\marphi^*(u) = \dfrac{\marphi(u)}{u}\colon (0, 1]\to [1, \infty]$ and $\marpsi^*(v) = \dfrac{\marpsi(v)}{v}\colon (0, 1]\to [1, \infty]$ are non-increasing,
\end{enumerate}
are called \emph{Marshall's copulas} and are well known to model the dependence between the pair of random variables \eqref{intro:eq:marshall} described in the introduction.
The following proposition gives the stochastic interpretation for the Marshall's copulas and explains the role of the function parameters $\marphi$ and $\marpsi$. The proofs can be found in the original Marshall's paper \cite{marshall1996}.
\begin{prop}\label{prop-marshall-properties}
	Let $X, Y, Z$ be independent random variables with corresponding distribution functions $F_X, F_Y$ and $F_Z$. Define $U= \max\{ X, Z\}$ and $V=\max\{ Y, Z\}$ and let $F$ and $G$ denote their respective distribution functions. Furthermore, let $H$ be the bivariate joint distribution function of the pair $(U, V)$. Then:
	\begin{enumerate}[(i)]
		\item $F = F_XF_Z$ and $G = F_YF_Z$.
		\item A pair of functions $\marphi$ and $\marpsi$ satisfying (P1)--(P3) exists, so that $F_X(x) = \marphi(F(x))$ for all $x$, where $F(x)>0$, and $F_Y(y) = \marpsi(G(y))$ for all $y$, where $G(y)>0$.
		\item $F_Z = \dfrac{F}{F_X} = \dfrac{F}{\marphi(F)} = \dfrac{G}{F_Y} = \dfrac{G}{\marpsi(G)}$, where the expressions are defined.
		\item $H(x, y) = \marcop_{{\marphi}, {\marpsi}}(F(x), G(y))$.
		\item $\marphi^*\circ F = \marpsi^*\circ G$.
	\end{enumerate}
\end{prop}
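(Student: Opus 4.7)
The plan is to dispatch the five items in order, with item (ii) the one requiring genuine thought and (iii)--(v) obtained by direct computation. For (i), independence of $X$ and $Z$ gives
\[
F(x)=P(\max\{X,Z\}\leqslant x)=P(X\leqslant x)P(Z\leqslant x)=F_X(x)F_Z(x),
\]
and analogously $G=F_YF_Z$; in particular $F\leqslant F_X$ and $F\leqslant F_Z$.

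For (ii), the plan is to first define $\marphi$ on the range of $F$ by the prescription $\marphi(F(x))=F_X(x)$ and then extend to all of $[0,1]$. Well-definedness is the crux: if $F(x_1)=F(x_2)>0$ for some $x_1<x_2$, then $F_X(x_1)F_Z(x_1)=F_X(x_2)F_Z(x_2)$ with both factors non-decreasing and positive, which forces each factor to be constant on $[x_1,x_2]$; on $\{F=0\}$ one sets $\marphi(0)=0$. Monotonicity of $\marphi$ on the range of $F$ then follows from the implication $F(x_1)<F(x_2)\Rightarrow x_1<x_2\Rightarrow F_X(x_1)\leqslant F_X(x_2)$. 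On any gap $(a,b)$ in the range of $F$ I would extend $\marphi$ by linear interpolation between its endpoint values $\alpha=\marphi(a)$ and $\beta=\marphi(b)$; a short calculation shows that this preserves both monotonicity of $\marphi$ and the non-increasing property of $\marphi^*$, the key inequality $\alpha b\geqslant\beta a$ being a consequence of the identity $\marphi^*(F(x))=F_X(x)/F(x)=1/F_Z(x)$ combined with monotonicity of $F_Z$. The normalization $\marphi(1)=1$ is forced because $F(x)\to 1$ implies both $F_X(x),F_Z(x)\to 1$. The same identity $\marphi^*(F(x))=1/F_Z(x)$ also immediately yields $\marphi^*\in[1,\infty]$, which is exactly (P3). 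The construction of $\marpsi$ from $F_Y$ and $G$ is symmetric.

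Items (iii)--(v) then follow by routine calculation. Item (iii) is a direct restatement of $\marphi^*(F(x))=1/F_Z(x)$ and its $\marpsi$-analogue. For (iv), independence gives
\[
H(x,y)=P(X\leqslant x)P(Y\leqslant y)P(Z\leqslant\min\{x,y\})=F_X(x)F_Y(y)F_Z(\min\{x,y\}),
\]
and using the identity $F_Z(x)F_Z(y)=F_Z(\min\{x,y\})F_Z(\max\{x,y\})$ together with (iii) this rearranges as
\[
H(x,y)=F(x)G(y)\min\!\Bigl\{\tfrac{1}{F_Z(x)},\tfrac{1}{F_Z(y)}\Bigr\}=F(x)G(y)\min\!\Bigl\{\tfrac{\marphi(F(x))}{F(x)},\tfrac{\marpsi(G(y))}{G(y)}\Bigr\}=\marcop_{\marphi,\marpsi}(F(x),G(y)).
\]
Item (v) is then immediate from (iii), since both $\marphi^*\circ F$ and $\marpsi^*\circ G$ equal the map $x\mapsto 1/F_Z(x)$. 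Thus the single real obstacle is the careful construction of $\marphi$ and $\marpsi$ in (ii), in particular the verification that the extension off the range of $F$ can be done so as to respect both the monotonicity of $\marphi$ and the opposite monotonicity of $\marphi^*$; the remaining items only repackage the factorisation obtained in (i).
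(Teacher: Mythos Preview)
Your argument is correct. Note, however, that the paper does not give its own proof of this proposition: it is stated as background and the reader is referred to Marshall's original paper \cite{marshall1996} for the proofs. So there is no ``paper's proof'' to compare against in the strict sense.

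That said, your construction in (ii) via linear interpolation on the gaps of $\mathrm{im}\,F$ is precisely the extension the paper attributes to \cite{omladic2016}. It is worth knowing that the paper, in Section~4, deliberately replaces this linear-interpolation extension by the piecewise definition \eqref{eq-phi-ext}. The reason is not that linear interpolation fails to produce a valid $\marphi$ in the classical cadlag setting of Proposition~\ref{prop-marshall-properties}---it does, as you show---but that it is ill-suited to the imprecise setting the paper is heading towards: it requires cadlag distribution functions, and more importantly it does not preserve the pointwise order $F'_X\leqslant F_X\Rightarrow\marphi'\leqslant\marphi$ needed later. So your proof is fine for the proposition as stated, but the paper's alternative construction is what makes the subsequent order-theoretic results (Lemma~\ref{lem-phi-order} and its consequences) go through.
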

\textbf{Observations:}
\begin{enumerate}
  \item We first observe that Conditions (P1) and (P2) mean that functions $\marphi$ and $\marpsi$ are distribution functions (if they are cadlag). However, it turns that together with (P3) they are actually continuous which is more than cadlag (see e.g.\cite{omladic2016}). Condition (P3) yields the fact that they have a reverse hazard rate which is smaller than that of a uniform random variable on $[0,1]$.
  \item According to Proposition~\ref{prop-marshall-properties}(iii),  function $\marphi$ is a distribution function which composed with $U$ yields $X$ and similarly for $\marpsi$, $V$ and $Y$.
  \item Some more stochastic interpretations of the two functions are given in Proposition~\ref{prop-marshall-properties}(iv) and (v).
  \item We shall not go into all the details. However, let us point out that Marshall in his paper \cite{marshall1996} gives a number of examples warning against overuse of the model to which one is inclined to in view of the supposedly omnipotent Sklar's theorem. In particular, only marginals satisfying the conditions of this proposition are allowed into Marshall's copula if we want to maintain the stochastic interpretation we started with.
\end{enumerate}
These facts lead our way towards generalizing Marshall's copulas in the imprecise setting. In doing so, we need to keep the stochastic interpretation in terms of shock models in power, yet we need to do it in an imprecise way. Here is a historical example due to Marshall and Olkin \cite{MaOl} which has been definitely  applied the most in the area; some of the recent applications have been cited in the introduction. An example of  stochastic interpretation will be given in Subsection \ref{subsec:stochastic} in order to help an interested reader understand the interplay of shocks in different shock induced models.
\begin{ex}\label{ex-marcop}
	If the occurrence of shocks in the model is governed by independent Poisson processes (a situation that often happens in practice), then, as it turns out, we get $X, Y$ and $Z$ to be independent with the following distribution functions:
	\begin{align*}
		F_X(x) & = 1-e^{-\lambda x}, \text{ for } x\geqslant 0 \text{ and 0 for } x<0; \\
		F_Y(y) & = 1-e^{-\eta y}, \text{ for } y\geqslant 0 \text{ and 0 for } y<0; \\
		F_Z(x) & = \begin{cases}
			0, & \text{ if } x< \mu; \\
			1, & \text{ if } x\geqslant \mu, \\
		\end{cases}
	\end{align*}
	where $\lambda, \eta$ and $\mu$ are some positive constants, actually they are the parameters of the underlying Poisson processes. Further, let $U = \max\{ X, Z \}$ and $V= \max\{ Y, Z \}$. Their distribution functions are then equal to
	\begin{align*}
		F(x) & = \begin{cases}
			1-e^{-\lambda x} &  \text{ if } x \geqslant \mu ; \\
			0 & \text{ elsewhere};
		\end{cases} \\
		G(y) & = \begin{cases}
		1-e^{-\eta y} &  \text{ if } y \geqslant \mu ; \\
		0 & \text{ elsewhere}.
		\end{cases}		
	\end{align*}
	Marshall's copula $\marcop_{{\marphi}, {\marpsi}}$ modeling the dependence between $U$ and $V$ is then generated by the functions
	\begin{align}
		\phi(u) & = \begin{cases}\label{eq-ex-phi}
			0 &  \text{ if } u = 0; \\
			1-e^{-\lambda \mu} &  \text{ if } 0 < u < 1-e^{-\lambda \mu} ; \\
			u & 1-e^{-\lambda \mu} \leqslant u \leqslant 1.
		\end{cases}
		\intertext{ and }
		\psi(v) & = \begin{cases}
		0 &  \text{ if } v = 0; \\ \nonumber
		1-e^{-\eta \mu} &  \text{ if } 0 \leqslant v < 1-e^{-\eta \mu} ; \\
		v & 1-e^{-\eta \mu} \leqslant v \leqslant 1.
		\end{cases}
	\end{align}
	Note that the above functions are only unique on $\mathrm{im}F\cup \{0\}$ and $\mathrm{im}G\cup \{0\}$ respectively.
\end{ex}

\subsection{Maxmin copulas}
Another family of copulas related to Marshall's copulas are the so called maxmin copulas introduced recently by Omladi\v{c} and Ru\v{z}i\'{c} \cite{omladic2016}. They have been shown to model the dependence between the pair of random variables \eqref{intro:eq:maxmin}, whose stochastic interpretation is described in the introduction. A maxmin copula depends on two maps $\mmphi$ and $\mmpsi\colon [0, 1]\to [0, 1]$, satisfying the properties:
\begin{enumerate}[(F1)]
	\item $\mmphi(0) = \mmpsi(0) = 0$ and $\mmphi(1) = \mmpsi(1) = 1$;
	\item $\mmphi$ and $\mmpsi$ are non-decreasing;
	\item $\mmphi^*(u) = \dfrac{\mmphi(u)}{u}\colon (0, 1]\to [1, \infty]$ and $\mmpsi_*(w) = \dfrac{1-\mmpsi(w)}{w-\mmpsi(w)}\colon [0, 1]\to [1, \infty]$ are non-increasing.
\end{enumerate}
A \emph{maxmin copula} is a map $\mmcop\colon [0, 1]\times [0, 1] \to [0, 1]$ defined by
\begin{equation*}\label{eq-maxmin-copula-defintion}
 \mmcop_{\mmphi, \mmpsi}(u, w) = uw + \min \{ u(1-w), (\mmphi(u)-u)(w-\mmpsi(w)) \}.
\end{equation*}
The following proposition gives the stochastic representation of the maxmin copulas and explains the role of functions $\mmphi$ and $\mmpsi$. The proofs can be found in Omladi\v{c} and Ru\v{z}i\'{c} \cite{omladic2016}.
\begin{prop}\label{prop-maxmin-properties}
	Let independent random variables $X, Y$ and $Z$ be given with respective distribution functions $F_X, F_Y$ and $F_Z$. Define\footnote{Note that $U$ is the same as in Proposition~\ref{prop-marshall-properties}.} $U=\max\{ X, Z\}$ and $W = \min\{ Y, Z \}$ and let $F, K$ 
	denote the distribution functions of $U$ and $W$ respectively. Let $H$ be the joint distribution function of $(U, W)$.
	Then:
	\begin{enumerate}[(i)]
		\item $F(x) = F_X(x) F_Z(x)$ and $K(y) = F_Y(y) + F_Z(y) - F_Y(y)F_Z(y)$. 
		\item A pair of functions $\mmphi$ and $\mmpsi$ satisfying (F1)--(F3) exists, so that $F_X(x) = \mmphi(F(x))$ for all $x$, where $F(x)>0$ and $F_Y(y) = \mmpsi(K(y))$ for all $y$, where $K(y)<1$.
		\item $H(x, y) = \mmcop_{\mmphi, \mmpsi}(F(x), K(y))$.
		\item $\mmphi^*\circ F = \mmpsi_*\circ K$.
		\item In terms of survival functions instead of distribution functions, the second equation in (i) assumes the following equivalent form $\hat K(y) = \hat F_Y(y) \hat F_Z(y)$. 		
	\end{enumerate}
\end{prop}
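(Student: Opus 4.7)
I would tackle the five parts more or less in the order (i), (v), (iv), (ii), (iii), since the later parts build on the explicit formulas of the earlier ones.

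Parts (i) and (v) follow immediately from independence: $F(x)=P(X\le x,Z\le x)=F_X(x)F_Z(x)$, and $\hat K(y)=P(Y>y,Z>y)=\hat F_Y(y)\hat F_Z(y)$; expanding the latter gives the stated formula for $K$. Part (iv) is then a direct computation: on the image of $F$ with $F(x)>0$, using $F_X(x)=\mmphi(F(x))$ gives $\mmphi^*(F(x))=F_X(x)/F(x)=1/F_Z(x)$, and similarly $w-\mmpsi(w)=K(y)-F_Y(y)=F_Z(y)\hat F_Y(y)$ and $1-\mmpsi(w)=\hat F_Y(y)$, so $\mmpsi_*(K(y))=\hat F_Y(y)/(F_Z(y)\hat F_Y(y))=1/F_Z(y)$.

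For (ii), I would define $\mmphi$ on $\mathrm{im}(F)\setminus\{0\}$ by $\mmphi(F(x)):=F_X(x)$ and check well-definedness as follows. If $F(x_1)=F(x_2)>0$ with $x_1<x_2$, then $F_X(x_1)F_Z(x_1)=F_X(x_2)F_Z(x_2)$; since both $F_X$ and $F_Z$ are non-decreasing and positive, a strict inequality in either factor forces a strict reverse inequality in the other, contradicting monotonicity. Hence $F_X$ and $F_Z$ are both constant on each positive level set of $F$, and $\mmphi$ is well-defined. The dual argument using (v) shows that $\mmpsi(K(y)):=F_Y(y)$ is well-defined whenever $K(y)<1$. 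I then extend both functions to all of $[0,1]$ by monotone interpolation (for instance $\mmphi(u)=\sup\{F_X(x):F(x)\le u\}$, with $\mmphi(0)=0$, $\mmphi(1)=1$, and symmetrically for $\mmpsi$), which preserves monotonicity and yields (F1) and (F2). Condition (F3) follows from (iv): the compositions $\mmphi^*\circ F$ and $\mmpsi_*\circ K$ coincide with the non-increasing function $1/F_Z$, so $\mmphi^*$ and $\mmpsi_*$ are non-increasing on the relevant images; the extension can be chosen so that this persists on $[0,1]$.

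The main obstacle is part (iii), because the defining formula for $\mmcop_{\mmphi,\mmpsi}$ contains a minimum whose active branch must be identified probabilistically. I would compute
\begin{equation*}
H(x,y)=P(X\le x,\,Z\le x,\,\min\{Y,Z\}\le y)=F(x)-P(X\le x,\,Y>y,\,y<Z\le x),
\end{equation*}
so that $H(x,y)=F(x)$ when $x\le y$ (the last event is empty) and $H(x,y)=F(x)-F_X(x)\hat F_Y(y)(F_Z(x)-F_Z(y))$ when $x>y$, by independence. On the copula side, with $u=F(x)$ and $w=K(y)$, a direct substitution using the identities from (i)--(v) gives $u(1-w)=F_X(x)F_Z(x)\hat F_Y(y)\hat F_Z(y)$ and $(\mmphi(u)-u)(w-\mmpsi(w))=F_X(x)\hat F_Z(x)F_Z(y)\hat F_Y(y)$. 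Their difference factors as $F_X(x)\hat F_Y(y)(F_Z(x)-F_Z(y))$, whose sign is that of $x-y$ because $F_Z$ is non-decreasing. Thus the branch $u(1-w)$ is selected exactly when $x\le y$ and the branch $(\mmphi(u)-u)(w-\mmpsi(w))$ exactly when $x>y$, and a short algebraic check matches $uw$ plus each chosen branch with the two expressions for $H(x,y)$ computed above. The only subtlety worth flagging is the boundary behavior when $F(x)=0$ or $K(y)=1$, which I would handle separately by noting that $H$ vanishes or equals the appropriate marginal, consistent with $\mmcop_{\mmphi,\mmpsi}$ at the boundary.
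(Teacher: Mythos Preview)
The paper does not give its own proof of this proposition; it merely cites the original reference \cite{omladic2016} where maxmin copulas were introduced, so there is no ``paper's proof'' to compare against. Your direct computational route is correct and is essentially the natural one: obtain (i) and (v) from independence, read off (iv) as the common value $1/F_Z$, and for (iii) compute $H$ by conditioning on whether $y<Z\le x$ is nonempty and then identify which branch of the minimum in $\mmcop_{\mmphi,\mmpsi}$ is active via the sign of $F_Z(x)-F_Z(y)$. The algebra you outline checks out line by line.

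The only soft spot is the extension step in (ii). Your candidate $\mmphi(u)=\sup\{F_X(x):F(x)\le u\}$ is fine for well-definedness and monotonicity, but it does not automatically make $\mmphi^*$ non-increasing off $\mathrm{im}\,F$; your phrase ``the extension can be chosen so that this persists'' defers rather than resolves the point. This is not a gap in the \emph{result}: such an extension certainly exists (the original paper uses linear interpolation, and the present paper constructs an alternative explicit extension in \eqref{eq-phi-ext} and \eqref{eq-chi-ext} precisely for this purpose). But if you want (ii) to be self-contained, you should either exhibit an explicit extension and verify (F3), or invoke one of these constructions.
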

When comparing the Marshall's and maxmin models, we observe that the function $\marphi$ is defined in an analogous way corresponding to the underlying variables $X, Y$, and $Z$ while $\marpsi$ and $\mmpsi$ are defined differently, actually they are defined each in an opposite way to the other. Observe as above that functions $\marpsi$ and $\mmpsi$ are necessarily continuous.

Let us first give an adjustment of the classical Marshall-Olkin example to the maxmin case. Some more stochastic interpretation for both types of copulas will be given in Subsection \ref{subsec:stochastic}.
\begin{ex}\label{ex-mmcop}
	Let $X, Y$ and $Z$ be as in Example~\ref{ex-marcop} and take $W= \min\{Y, Z\}$. The distribution function of $W$ is then
	\[
		K(y) = \begin{cases}
			0 & \text{ if } y < 0 ; \\
		 	1-e^{-\eta y} & \text{ if } 0\leqslant y< \mu ; \\
		 	1 & \text{ if } \mu \leqslant y .
		\end{cases}
	\]
	The maxmin copula $\mmcop_{\mmphi, \mmpsi}$ modelling the dependence between $U$ and $W$ is then generated by $\phi$ as in \eqref{eq-ex-phi} and
	\[
		\mmpsi(w) = \begin{cases}
			w & \text{ if } 0 \leqslant w < 1-e^{-\eta \mu}; \\
			1-e^{-\eta \mu} & \text{ if } 1-e^{-\eta \mu} \leqslant w < 1 ; \\
			1 & \text{ if } w = 1,
		\end{cases}
	\]
	which is again unique only on $\mathrm{im} K\cup \{ 0 \}$.
\end{ex}

\subsection{Stochastic interpretation of shock model copulas}\label{subsec:stochastic}

We present in Figure 1\footnote{These images were created with the help of Mathematica \cite{math}.} stochastic interpretation of a possible concrete shock model example. On the first image possible distribution functions of three independent shocks are given: $F_X,F_Y,$ and $F_Z$. The second image (the left hand one in the second row) gives the distribution functions of the resulting component functions in the Marshall's case. The third image (the right hand one of the second row) gives the distribution functions of the resulting component functions in the maxmin case. The fact that the global shock acts beneficiary on the second component $W$ in the maxmin case results in a clear stochastic improvement in its behavior (i.e., the graph of $F_W$ is way above the graph of $F_U$) compared to the component $V$ in the Marshall's case (i.e., the graph of $F_V$ is almost always below the graph of $F_U$) -- here $U$ is the same on both second row images and the graph of $F_U$ may be seen as a prototype for stochastic behaviour of a component.

\begin{figure}[h!]\label{fig:slika1}
            \hfil \includegraphics[width=0.50\textwidth]{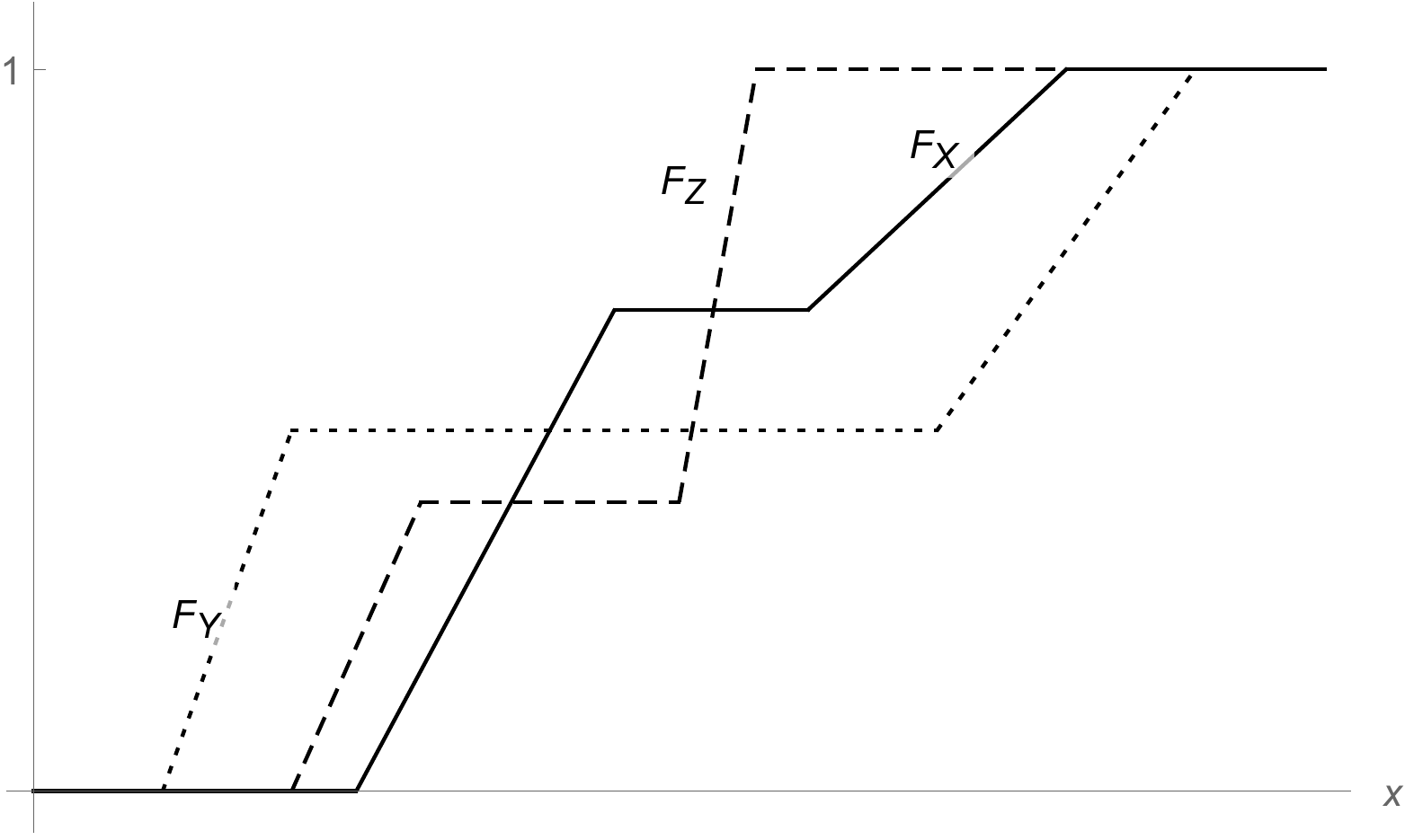} \hfil  \\ \hfil \includegraphics[width=0.49\textwidth]{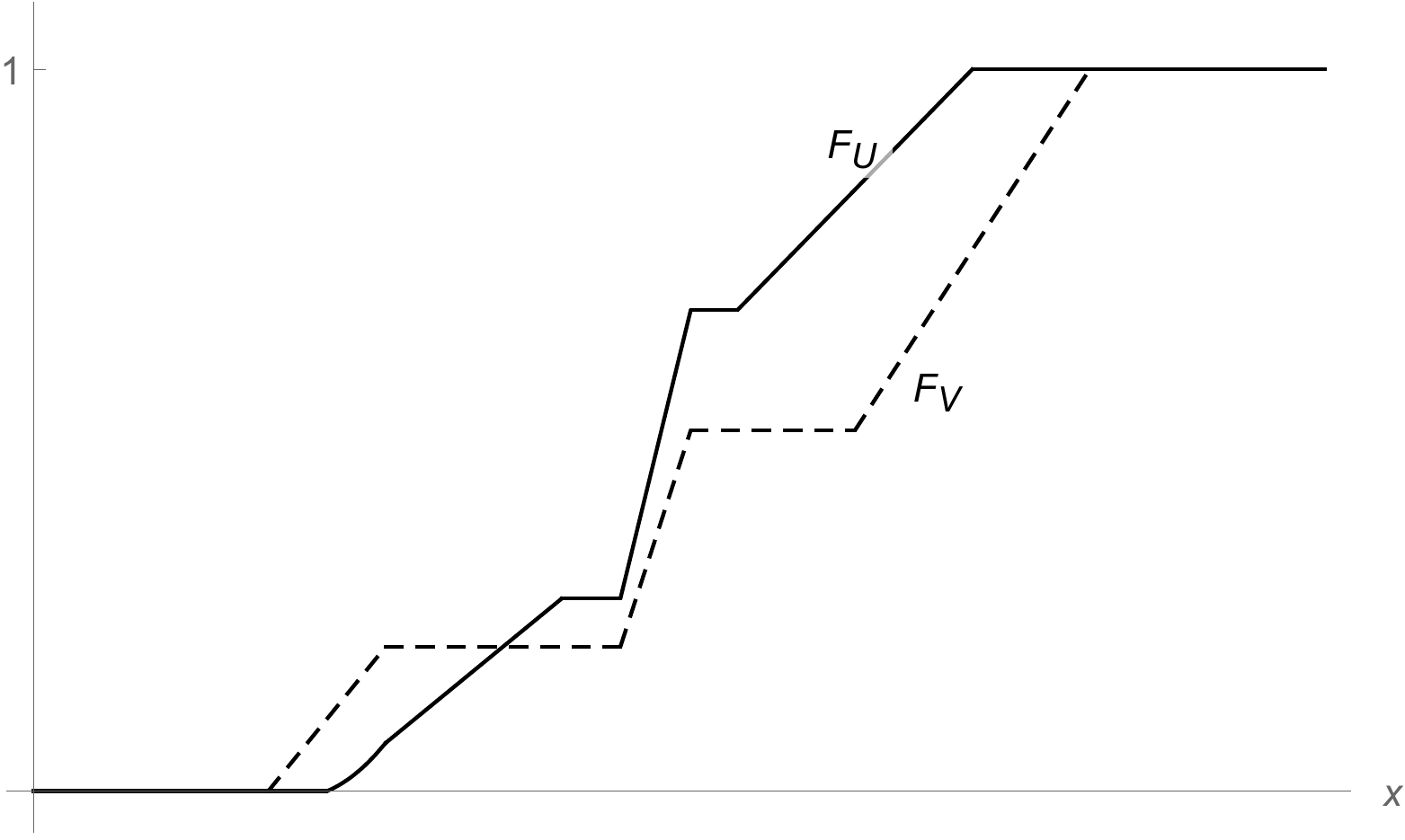} \hfil \hfil \includegraphics[width=0.49\textwidth]{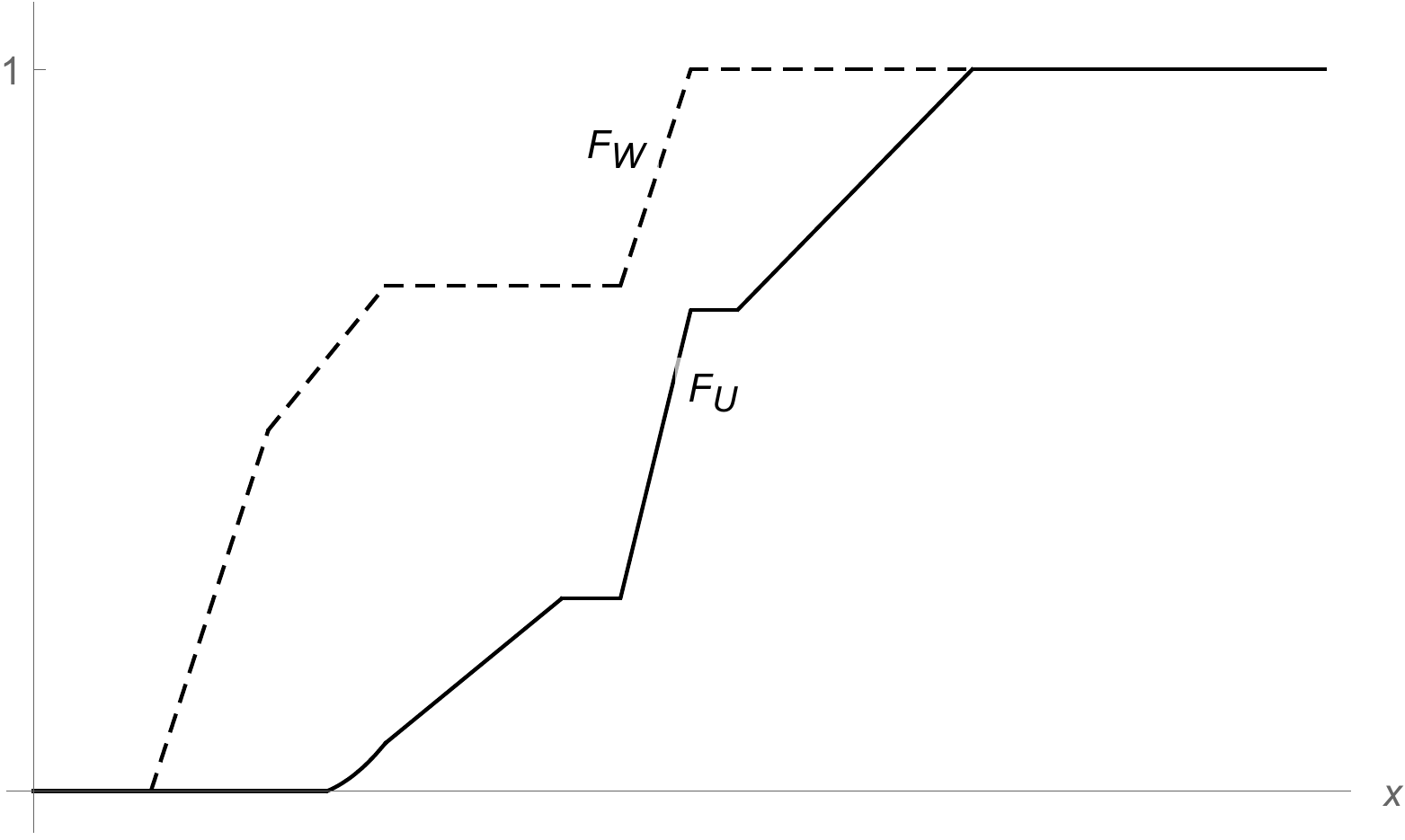}\hfil
            \caption{ Stochastic interpretation of a possible shock model realization: the Marshall's and the maxmin case   }
\end{figure}

The pair of functions $\marphi,\marpsi$ are sometimes called the \emph{generators} or \emph{generating functions} of the Marshall's copula. Similarly, the pair of functions $\marphi,\mmpsi$ are called the \emph{generators} or \emph{generating functions} of the maxmin copula. Although they are thought of as functions generating the two respective copulas independently of the marginals that one can insert into either of the copulas, let us emphasize again that their stochastic interpretation can only be given in the situation described here. So, in the case of Marshall's shock model only in this situation the generating functions $\marphi$ and $\marpsi$ can be stochastically interpreted as distribution functions of respective stochastic components of the system conditionally given that the two shocks are independent and distributed uniformly on $[0,1]$. Similarly in the case of maxmin shock model the generating functions $\mmphi$ and $\mmpsi$ have the stochastic interpretation of distribution functions of respective stochastic components of the system conditionally given that the two shocks are independent and distributed uniformly on $[0,1]$ only in the situation described.

Consequently, it is imminent that the imprecise setting goes deeper than just copulas, it has to be determined out of the mutual behavior of shocks.

\section{Order relations generated by shock models}\label{sec:order}

This section is devoted to extending the theory presented in Section \ref{ss-mmc} to the imprecise setting, i.e.\ without assuming that the underlying distribution functions are cadlag.

Let us recall briefly some details on the pairs of generating functions $\marphi$, $\marpsi$, and $\mmphi$, $\mmpsi$ (cf.\  \cite{omladic2016}). The first two are defined as functions satisfying (P1)--(P3) such that $F_X = \marphi(F)$ and $F_Y=\marpsi(G)$, where $F=F_XF_Z$ and $G=F_YF_Z$. This follows from the fact that $F$ is the distribution function of $U=\max\{X,Z\}$ and $G$ is the distribution function of $V=\max\{Y,Z\}$. Similarly $\marphi$, $\mmpsi$ are defined as functions satisfying (F1)--(F3) such that $F_X = \mmphi(F)$  and $F_Y = \mmpsi(K)$, where $K = F_Y + F_Z - F_YF_Z$. In the background this time are the distribution functions $F$ of $U=\max\{X,Z\}$ and $K$ of $V=\min\{Y,Z\}$.  It has been shown in \cite{omladic2016} that these relations uniquely determine $\marphi, \marpsi$ and $\mmpsi$ on $\mathrm{im}\, F, \mathrm{im}\, G$, and $\mathrm{im}\, K$ respectively.

Our ultimate goal is to consider the case where the variables $X$ and $Y$ have imprecise distributions, i.e.\ given in terms of $p$-boxes.  We therefore need to analyse how the order relations $\leqslant$ implied by $p$-boxes $(\low F_X, \up F_X)$ and $(\low F_Y, \up F_Y)$ translate to the order relations on the corresponding bivariate distributions, leading to the bivariate $p$-boxes and related copulas. So, the question is how the order on distribution functions $F'_X\leqslant F_X$ and $F'_Y\leqslant F_Y$, transmits to order relation $\leqslant$ on the corresponding pairs of generating functions $\marphi, \marpsi$ in the case of Marshall's copulas, respectively $\mmphi,\mmpsi$ in the case of maxmin copulas. As the construction for $\marpsi$ is essentially the same as the one for $\marphi$, we will analyse only the cases of $\marphi$ and $\mmpsi$.


\subsection{Order relations for Marshall's copulas}\label{s-ormc}
%
Here we denote by $f(x+)$, respectively $f(x-)$, the right limit, respectively the left limit of a monotone (increasing) function $f$ at $x$; observe that existence of these limits follows by monotonicity of the function. For distribution functions $F_X$ and $F_Z$ let $F=F_XF_Z$. Choose a $u\in (0, 1)$ and let $x_0$ be any value such that $F(x_0-) \leqslant u \leqslant F(x_0+)$.
Furthermore, let us introduce
\begin{align*}
	u_- & = F_X(x_0-)F_Z(x_0-) = F(x_0-) & u_l & = F_X(x_0-)F_Z(x_0) \\
	u_+ & = F_X(x_0+)F_Z(x_0+) = F(x_0+) & u_u & = F_X(x_0+)F_Z(x_0)
\end{align*}
and define
\begin{equation}\label{eq-phi-ext}
	\mmphi(u) =
	\begin{cases}
		0 & \text{if } u = 0; \\
		F_X(x_0-) & \text{if } u_-\leqslant u \leqslant u_l; \\
		\dfrac{u}{F_Z(x_0)} & \text{if } u_l\leqslant u \leqslant u_u; \\
		F_X(x_0+) & \text{if } u_u\leqslant u \leqslant u_+ ; \\
		1 & \text{if } u = 1.
	\end{cases}
\end{equation}
We will show in Proposition \ref{prop:marshall}\emph{(iii)} that this definition fulfills the requirement that $F_X = \marphi(F)$. However, this requirement does not determine function $\marphi$ uniquely in general, only the values on the image of $U$ are determined. A possible definition, based on the extension from the image to the entire interval $[0, 1]$ via the linear interpolation technique, has been proposed in \cite{omladic2016}. This extension, however is not suitable for our purpose, because \textbf{(a)} it assumes distribution functions to be cadlag, while our distribution functions are monotone only in accordance with the usual assumption in the $p$-box approach, and \textbf{(b)} the order on distribution functions such as $F_X$ is not preserved to the corresponding generating functions $\marphi$.
\begin{prop}\label{prop:marshall}
	Let $F_X, F_Z$ be given, and let $F=F_XF_Z$. If $\mmphi$ is defined by \eqref{eq-phi-ext}, then
	\begin{enumerate}[(i)]
		\item $\mmphi$ is well defined;
		\item $\mmphi$ is continuous;
		\item $\mmphi(F(x)) = F_X(x)$ for every $x\in \RR$ such that $F(x)>0$;
		\item $\mmphi$ is non-decreasing;
		\item $\mmphi^*(u) = \dfrac{\mmphi(u)}{u}$ is non-increasing.
	\end{enumerate}
\end{prop}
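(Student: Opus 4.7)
The plan is to handle the parts of the proposition roughly in the order (i), (iii), (ii), (iv), (v), since (iii) is what motivates the case split and, once it is in hand, the remaining continuity and monotonicity statements reduce to checking that the pieces of \eqref{eq-phi-ext} fit together cleanly.

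First I would prove well-definedness (i). The only possible ambiguity arises when $u\in(0,1)$ admits two choices $x_0<x_0'$ both satisfying $F(x_0-)\le u\le F(x_0+)$; then $F\equiv u>0$ on $(x_0,x_0')$. Since $F=F_XF_Z$ with both factors non-decreasing and bounded below by positive constants, an elementary product-monotonicity argument (a strict increase in either factor would force a strict increase in $F$) shows that $F_X$ and $F_Z$ are both constant on $(x_0,x_0')$. Consequently the breakpoints $u_-,u_l,u_u,u_+$ produced by $x_0'$ dovetail with those produced by $x_0$ on the overlap, and \eqref{eq-phi-ext} returns the same value for either choice. This product-monotonicity observation is then reused throughout the proof.

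Next I would verify (iii), which essentially drives the whole definition. Given $x$ with $F(x)>0$, take $x_0=x$ and $u=F(x)$; monotonicity of $F_X$ gives
\[
u_l=F_X(x-)F_Z(x)\le F_X(x)F_Z(x)=F(x)\le F_X(x+)F_Z(x)=u_u,
\]
so the middle (linear) case of \eqref{eq-phi-ext} applies and $\mmphi(F(x))=F(x)/F_Z(x)=F_X(x)$, where $F_Z(x)>0$ follows from $F(x)>0$.

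For continuity (ii) and monotonicity (iv), within a single $x_0$ the function $\mmphi$ consists of two constant pieces joined by a linear piece of positive slope $1/F_Z(x_0)$, and direct substitution at the internal breakpoints $u_l$ and $u_u$ shows the pieces meet at $F_X(x_0-)$ and $F_X(x_0+)$ respectively. At the junction between the intervals $[u_-,u_+]$ associated with consecutive admissible values $x_0<x_0'$, the rightmost value on the $x_0$-interval is $F_X(x_0+)$ and the leftmost value on the $x_0'$-interval is $F_X(x_0'-)$; together with the dovetailing from (i), non-decrease of $F_X$ then delivers both non-decrease and continuity of $\mmphi$. The boundary values at $u=0,1$ are handled by letting $x_0\to\pm\infty$.

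Finally for (v), I would compute $\mmphi(u)/u$ on each of the three middle pieces: it equals $F_X(x_0-)/u$ on $[u_-,u_l]$ and $F_X(x_0+)/u$ on $[u_u,u_+]$, both non-increasing in $u$, and the constant $1/F_Z(x_0)$ on the linear piece $[u_l,u_u]$; the three values coincide at the internal breakpoints $u_l,u_u$. Across two consecutive intervals with $x_0<x_0'$, the ratio drops from $1/F_Z(x_0+)$ at $u_+$ to $1/F_Z(x_0'-)$ at $u_-'$, and monotonicity of $F_Z$ provides the required inequality. The main obstacle throughout is the bookkeeping needed when $F_X$ and $F_Z$ jump simultaneously, and this is kept under control by repeatedly invoking the product-monotonicity observation from (i).
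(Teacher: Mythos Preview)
Your proposal is correct and follows essentially the same approach as the paper's proof: the product-monotonicity observation for well-definedness, the inclusion $F(x)\in[u_l,u_u]$ for (iii), the piecewise analysis within each $[u_-,u_+]$ together with the bounds $F_X(x_0+)\le F_X(x_0'-)$ across intervals for (ii) and (iv), and the reduction of (v) to $1/F_Z(x_0+)\ge 1/F_Z(x_0'-)$. The only organizational difference is that you treat (iii) before (ii) and merge (ii) with (iv), which is harmless. One small caution: your remark that the endpoints $u=0,1$ are handled ``by letting $x_0\to\pm\infty$'' overstates things---continuity of $\mmphi$ at $0$ can fail (e.g.\ if $F_Z$ jumps from $0$ while $F_X$ is already positive), and indeed the paper's own argument only establishes continuity on $(0,1)$; you should not claim more than that.
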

\begin{proof}
	To see (i) choose $u\in (0, 1)$ and assume that for some real values $x_0 < x_1$ we have $F(x_0-)\leqslant u \leqslant F(x_0+)$ and $F(x_1-)\leqslant u \leqslant F(x_1+)$. This is only possible if $F(x_0+) = u = F(x_1-)$, which also means that $F_X(x_0+)=F_X(x_1-) = \mmphi(u)$. Note also that $\dfrac{u_l}{F_Z(x_0)} = F_X(x_0-)$ and $\dfrac{u_u}{F_Z(x_0)} = F_X(x_0+)$, which means that $\mmphi$ is also well defined within every interval $[u_-, u_+]$.
	
	To prove (ii), first observe that $\mmphi$ is continuous within every interval $[u_-, u_+]$.
	It is also clear that $\mmphi(F(x_0-)) = \mmphi(F(x_0-)-) = F_X(x_0-)$ and $\mmphi(F(x_0+)) = \mmphi(F(x_0+)+) = F_X(x_0+)$, which makes $\mmphi$ continuous everywhere on $(0, 1)$.
		
	Now choose an $x$ such that $F(x)=F_X(x) F_Z(x)>0$. Then clearly, $F_Z(x)>0$.
	Moreover, $F(x)\in [F_X(x-)F_Z(x), F_X(x+)F_Z(x)] = [u_l, u_u]$, whence $\mmphi(F(x)) = \dfrac{F(x)}{F_Z(x)} = F_X(x)$, which confirms (iii).
	
	Next choose an $u<u'$. Then for the corresponding $x_0$ and $x'_0$ we clearly have that $x_0 \leqslant x'_0$. In the case where $x_0 = x'_0$, $\mmphi(u) \leqslant \mmphi(u')$ follows directly from \eqref{eq-phi-ext}. If $x_0 < x'_0$, then we have that $\mmphi(u) \leqslant F_X(x_0+) \leqslant F_X(x'_0-) \leqslant \mmphi(u')$, which proves (iv).
	
	To show (v) we choose $u<u'$ as above. Now clearly, $\mmphi^*$ is non-increasing on the intervals $[u_-, u_+]$ and $[u'_-, u'_+]$, whence $\mmphi^*(u)\geqslant \mmphi^*(u')$ if $x_0 = x'_0$. If $x_0 < x'_0$, then we have
	\begin{multline*}
		\mmphi^*(u) \geqslant \mmphi^*(u_+) = \dfrac{F_X(x_0+)}{F_X(x_0+)F_Z(x_0+)} = \dfrac{1}{F_Z(x_0+)} \\
		\geqslant \dfrac{1}{F_Z(x'_0-)} = \mmphi^*(u'_-) \geqslant \mmphi^*(u').
	\end{multline*}	
\end{proof}

\begin{lem}\label{lem-phi-order}
	Let $F'_X\leqslant F_X$ and $F_Z$ be given, and let $F=F_XF_Z$ and $F'=F'_XF_Z$. Then $\mmphi' \leqslant \mmphi$, where $\mmphi'$ and $\mmphi$ are defined by applying \eqref{eq-phi-ext} to $F'_X$ and $F_X$ respectively.
\end{lem}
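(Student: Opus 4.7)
The plan is to prove $\mmphi'(u) \le \mmphi(u)$ pointwise. The boundary values $u \in \{0,1\}$ are immediate from the definition, so I would fix $u \in (0,1)$. Since $F' = F'_X F_Z \le F_X F_Z = F$ pointwise, taking $x_0 = \inf\{x : F(x) \ge u\}$ and $x_0' = \inf\{x : F'(x) \ge u\}$ as the representatives used in the construction \eqref{eq-phi-ext} of $\mmphi(u)$ and $\mmphi'(u)$, the inclusion $\{F' \ge u\} \subseteq \{F \ge u\}$ forces $x_0 \le x_0'$. I then split on whether this inequality is strict.

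In the case $x_0 = x_0'$, the key observation is that \eqref{eq-phi-ext} can be rewritten as the clipping identity
\[
\mmphi(u) \;=\; \max\!\Bigl(F_X(x_0-),\; \min\!\bigl(u/F_Z(x_0),\, F_X(x_0+)\bigr)\Bigr),
\]
which is a routine verification on the three subintervals $[u_-,u_l]$, $[u_l,u_u]$, $[u_u,u_+]$, with the natural convention in the degenerate case $F_Z(x_0)=0$. The analogous identity holds for $\mmphi'(u)$ with $F'_X$ in place of $F_X$. Because $F'_X(x_0-) \le F_X(x_0-)$ and $F'_X(x_0+) \le F_X(x_0+)$, the desired inequality reduces to the elementary fact that clipping a real number is monotone in the clipping interval: $\max(a',\min(t,b')) \le \max(a,\min(t,b))$ whenever $a' \le a$ and $b' \le b$.

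In the case $x_0 < x_0'$, I would first establish positivity of $F_Z$ on both sides of the gap. Indeed, for any $y \in (x_0, x_0')$ one has $F(y) \ge u > 0$, hence $F_Z(y) > 0$; letting $y \downarrow x_0$ and $y \uparrow x_0'$ and using monotonicity of $F_Z$ yields $0 < F_Z(x_0+) \le F_Z(x_0'-)$. Next I would establish the two one-sided bounds
\[
\mmphi(u) \;\ge\; \frac{u}{F_Z(x_0+)} \qquad \text{and} \qquad \mmphi'(u) \;\le\; \frac{u}{F_Z(x_0'-)}
\]
by a short case check on \eqref{eq-phi-ext}: the first follows from $u \le F_X(x_0+)F_Z(x_0+)$ together with $F_Z(x_0) \le F_Z(x_0+)$, while the second uses $u \ge F'_X(x_0'-)F_Z(x_0'-)$ together with $F_Z(x_0'-) \le F_Z(x_0')$. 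Chaining via $u/F_Z(x_0'-) \le u/F_Z(x_0+)$ closes the case.

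The main technical obstacle is precisely the one addressed at the start of the second case: possible jumps or zeros of $F_Z$ at $x_0$ or $x_0'$ threaten to render the quotients ill-defined or uninformative. The strict separation $x_0 < x_0'$ is exactly what forces $F_Z$ to be positive on both sides of the gap, making the clipping formula in Case A and the chain of quotient estimates in Case B valid throughout.
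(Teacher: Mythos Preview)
Your argument is correct, and in Case~B it is genuinely more economical than the paper's. In the equal-point case your clipping identity
\[
\mmphi(u)=\max\bigl(F_X(x_0-),\min(u/F_Z(x_0),F_X(x_0+))\bigr)
\]
is exactly the reformulation the paper uses in its Case~1, and the monotonicity of the clipping map in its two endpoints is the same one-line reduction.

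Where the two proofs diverge is the strict case $x_0<x_0'$. The paper proceeds by constructing an auxiliary distribution $F^u$ that agrees with $F$ off $[x_0,x_1]$ and equals $\mmphi(u)F_Z$ on that interval, and then argues in two steps that $\mmphi'(u)\leqslant\mmphi^u(u)\leqslant\mmphi(u)$, each step being reduced back to the Case~1 comparison at a suitable point. Your route instead extracts the two scalar bounds
\[
\mmphi(u)\geqslant \frac{u}{F_Z(x_0+)}\qquad\text{and}\qquad \mmphi'(u)\leqslant \frac{u}{F_Z(x_0'-)},
\]
and chains them through the monotonicity of $F_Z$. This is essentially the observation, already implicit in the proof of Proposition~\ref{prop:marshall}(v), that $\mmphi^*(u_+)=1/F_Z(x_0+)$ and $(\mmphi')^*(u'_-)=1/F_Z(x_0'-)$, applied across the two different generating functions rather than within a single one. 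The gain is that no intermediate distribution needs to be built or checked for monotonicity; the cost is that one must verify positivity of $F_Z(x_0+)$ and $F_Z(x_0'-)$, which you do correctly via $F(y)\geqslant u$ on the gap $(x_0,x_0')$ forcing $F_Z(y)\geqslant u$. Both approaches ultimately rest on the same structural fact---that $\mmphi^*$ is controlled by $1/F_Z$ at interval endpoints---but yours reaches it more directly.
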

\begin{proof}
	Let $u\in (0, 1)$, $x_0$, and $x_1$ be such that $F(x_0-)\leqslant u \leqslant F(x_0+)$ and $F'(x_1-)\leqslant u \leqslant F'(x_1+)$. Since $F'\leqslant F$, we may assume that $x_0 \leqslant x_1$. We consider two cases.
	
	\emph{Case 1}: $x_0 = x_1$. In this case we have that $F'_X(x_0-) \leqslant F_X(x_0-)$ and $F'_X(x_0+) \leqslant F_X(x_0+)$. Define
	\begin{equation*}
		\mmphi_0(u) = \min \left\{ F_X(x_0+), \max\left\{ \frac{u}{F_Z(x_0)}, F_X(x_0-) \right\} \right\}
	\end{equation*}
	and similarly for $\mmphi'_0$. Then clearly $\mmphi(u) = \mmphi_0(u) \leqslant \mmphi'_0(u) = \mmphi'(u)$ and the desired conclusion follows.
	
	\emph{Case 2}: $x_0 < x_1$. Define a distribution function $F^u$ by
	\begin{equation*}
		F^u(x) =
		\begin{cases}
			\mmphi(u)F_Z(x) & \text{if } x_0\leqslant x \leqslant x_1 \\
			F(x) & \text{otherwise.}
		\end{cases}
	\end{equation*}
	To see that $F^u$ is indeed a distribution function note that $F^u$ is clearly non-decreasing on $(x_0, x_1)$ and
	\begin{align*}
		F^u(x_0+) & \geqslant \mmphi(u)F_Z(x_0) \geqslant F_X(x_0-)F_Z(x_0) \geqslant F(x_0-) = F^u(x_0-) \\
		\intertext{and}
		F^u(x_1-)& \leqslant \mmphi(u)F_Z(x_1) \leqslant F_X(x_0+)F_Z(x_1) \\
		 & \leqslant F_X(x_1+)F_Z(x_1) \leqslant F(x_1+) = F^u(x_1+).
	\end{align*}	
	Everywhere else $F^u$ coincides with $F$, which is a distribution function.
	
	We now show that
\begin{equation}\label{eq:phiulephi}
  \mmphi^u(u) \leqslant \mmphi(u),
\end{equation}
where $\mmphi^u$ is obtained from $F^u$ in the same way as $\mmphi$ was obtained from $F$. We first consider the case where $u_-\leqslant u\leqslant u_u$. Then $\mmphi(u)\geqslant \dfrac{u}{F_Z(x_0)}$, whence $F^u(x_0+) \geqslant \mmphi(u)F_Z(x_0)\geqslant u$, and
	\begin{equation*}
		F^u(x_0+) = \mmphi(u)F_Z(x_0+) \leqslant F_X(x_0+)F_Z(x_0+) =  F(x_0+).
	\end{equation*}
	Since, clearly, $F^u(x_0-) = F(x_0-)$, we can use the argument of Case 1 to get \eqref{eq:phiulephi}.
	
	Secondly, if $u_u\leqslant u \leqslant u_+$, then we have that $\mmphi(u) = F_X(x_0+)$ and define
	\begin{equation*}
		u' = F^u(x_1) = \mmphi(u)F_Z(x_1) = F_X(x_0+)F_Z(x_1) \geqslant F_X(x_0+)F_Z(x_0+) = u_+ \geqslant u.
	\end{equation*}
	 By monotonicity of $\mmphi$ and $\mmphi^u$, and the fact that $u'\in \mathrm{im}F^u$, we then have that
	 \begin{equation*}
	 	\mmphi^u(u)\leqslant \mmphi^u(u') = \dfrac{u'}{F_Z(x_1)} = \dfrac{F_X(x_0+)F_Z(x_1)}{F_Z(x_1)} = F_X(x_0+) = \mmphi(u).
	 \end{equation*}	
so that \eqref{eq:phiulephi} holds again.

We now also prove that
\begin{equation}\label{eq:phi'lephiu}
  \mmphi'(u)\leqslant \mmphi^u(u).
\end{equation}
To see this, first notice that $F'(x_1+) \leqslant F(x_1+) = F^u(x_1+)$. Next we want to show that also
\[
    F'(x_1-) \leqslant F^u(x_1-)
\]
By the choice of $x_1$ we observe that $F'(x_1-) \leqslant u$. To see that $F^u(x_1-) \geqslant u
$, first consider the case when $u\leqslant u_u$, so that $\mmphi(u) \geqslant \dfrac{u}{F_Z(x_0)}$, and consequently
\[
F^u(x_1-) = \mmphi(u) F_Z(x_1-) \geqslant \dfrac{u}{F_Z(x_0)}F_Z(x_1-) \geqslant u,
\]
since clearly, $F_Z(x_1-) \geqslant F_Z(x_0)$. Now, if $u\geqslant u_u$, then $\mmphi(u) = F(x_0+)$, and therefore
\[
F^u(x_1-) = F_X(x_0+) F_Z(x_1-) \geqslant  F_X(x_0+)F_Z(x_0+) = u_+ \geqslant u.
\]
	
	 We have thus shown that both $F^u(x_1+) \geqslant F'(x_1+)$ and $F^u(x_1-) \geqslant F'(x_1-)$, whence by applying the argument of Case 1 of this proof, we derive Inequality \eqref{eq:phi'lephiu}.
	 Together with \eqref{eq:phiulephi} this implies the desired conclusion $\mmphi(u) \geqslant \mmphi'(u)$ and completes the proof.
\end{proof}

\textbf{Observations}:
\begin{enumerate}
  \item Due to the symmetry between $\marphi$ and $\marpsi$ everything that was done in this subsection for distribution functions $F_X$ and $F_Z$ in relation to them giving rise to the generating function $\marphi$, holds also for distribution functions $F_Y$ and $F_Z$ in relation to them giving rise to the generating function $\marpsi$.
  \item Using the techniques prepared in this Subsection we can see that Proposition \ref{prop-marshall-properties} remains valid in the imprecise case. Indeed, Item \emph{(i)} is simply given by definition, Item \emph{(ii)} follows by Proposition \ref{prop:marshall}\emph{(iii)}, and the rest of it follows easily form these two facts, using also the above Observation 1.
  \item We have thus seen that Marshall's copulas are technically exactly the same objects in the imprecise setting as in the classical approach, we only need to be careful in choosing the right generators. Additional care has to be taken about the order and for that we need Lemma \ref{lem-phi-order}.
\end{enumerate}

\subsection{Order relations for maxmin copulas}\label{s-ormm}

In this subsection we consider distribution functions $F_Y, F_Z$ and $K = F_Y + F_Z - F_YF_Z$. 
Observe that the way the ordered distribution functions $F_X$ and $F_Z$ give rise to ordered functions $\mmphi$ is exactly the same as in Subsection \ref{s-ormc}. It remains to determine how ordered distribution functions $F_Y$ and $K$ determine the corresponding ordered generating functions $\mmpsi$ of maxmin copulas using the defining relation $\mmpsi(K) = F_Y$ of $\mmpsi$.


To avoid repeating the tedious procedures from the previous section for this case, we use the transformation that translates our case to the one analysed there.
Let for every distribution function $F$ define its reverse distribution function $\tilde F(x) = 1-F(-x)$.
It is easy to verify that
\[
	\tilde K = \tilde F_Y \tilde F_Z.
\]
We are introducing notation $\tilde F$ and the term reverse distribution function only for the sake of simplifying the procedure of this subsection. (Observe in passing that $F\mapsto {\tilde F}$ sends a cadlag function to a caglad function, while a monotone nondecreasing function, like the distribution functions we are dealing with, is sent simply to a monotone nondecreasing function.) Now it only remains to translate the expressions used in the previous section.
First take the equation $u = \tilde K(x)$, which becomes $w = K(y)$ by replacing $w = 1-u$ and $y = -x$.
Thus, given a $w\in (0, 1)$ we let $y_0$ to be any value such that $K(y_0-) \leqslant w \leqslant K(y_0+)$.
Definition \eqref{eq-phi-ext} now directly translates into:
\begin{equation}\label{eq-chi-ext}
\mmpsi(w) =
\begin{cases}
0 & \text{if } w = 0; \\
F_Y(y_0-) & \text{if } w_-\leqslant w \leqslant w_l; \\
\dfrac{w-F_Z(y_0)}{1-F_Z(y_0)} & \text{if } w_l\leqslant w \leqslant w_u; \\
F_Y(y_0+) & \text{if } w_u\leqslant w \leqslant w_+ ; \\
1 & \text{if } w = 1,
\end{cases}
\end{equation}
where
\begin{align*}
w_- & = F_Y(y_0-) + F_Z(y_0-) -F_Y(y_0-)F_Z(y_0-) = K(y_0-) \\ w_l & = F_Y(y_0-) + F_Z(y_0) -F_Y(y_0-)F_Z(y_0) \\
w_+ & = F_Y(y_0+) + F_Z(y_0+) -F_Y(y_0+)F_Z(y_0+) = K(y_0+) \\ w_u & = F_Y(y_0-) + F_Z(y_0) -F_Y(y_0-)F_Z(y_0).
\end{align*}
In fact, we could write directly, that
\begin{equation}\label{eq-tilde-phi-chi}
	\mmpsi(w) = 1-\tilde \marphi(1-w),
\end{equation}
where $\tilde \marphi$ satisfying $\tilde \marphi(\tilde K) = \tilde F_Y$ is obtained as in the previous section.
Moreover, we note that the following relation also holds:
\begin{equation}
	\frac{1}{\mmpsi_*(w)} = 1-\frac{1}{\tilde \marphi^*(1-w)},
\end{equation}
which among others shows that if $\tilde \marphi^*$ is nonincreasing, so is $\mmpsi_*$.

Thus, we can state the following corollary.
\begin{cor}\label{cor}
	Let $F_Y, F_Z$ and $K=F_Y + F_Z - F_YF_Z$ be given and let $\mmpsi$ be defined by \eqref{eq-chi-ext}. Then
	\begin{enumerate}[(i)]
		\item $\mmpsi$ is well defined;
		\item $\mmpsi$ is continuous;
		\item $\mmpsi(K(y)) = F_Y(y)$ for every $y\in \RR$ such that $K(y)<1$;
		\item $\mmpsi$ is non-decreasing;
		\item $\mmpsi_*(w) = \dfrac{1-\mmpsi(w)}{w-\mmpsi(w)}$ is non-increasing.
	\end{enumerate}
\end{cor}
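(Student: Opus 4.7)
The plan is to deduce Corollary \ref{cor} directly from Proposition \ref{prop:marshall} by means of the reverse-distribution substitution $\tilde F(x) = 1 - F(-x)$ already introduced in the text. The key identity to exploit is \eqref{eq-tilde-phi-chi}, namely $\mmpsi(w) = 1 - \tilde\marphi(1-w)$, where $\tilde\marphi$ is the function produced by applying formula \eqref{eq-phi-ext} to the pair $(\tilde F_Y, \tilde F_Z)$. Since $\tilde K = \tilde F_Y \tilde F_Z$, Proposition \ref{prop:marshall} applies to $\tilde\marphi$, so that all five items of Corollary \ref{cor} can be pulled back through the affine order-reversing bijection $w\mapsto 1-w$.

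The first step is to verify that the case-by-case formula \eqref{eq-chi-ext} really coincides with $w\mapsto 1 - \tilde\marphi(1-w)$. Under $u = 1-w$ and $y_0 = -x_0$ the one-sided limits swap: $F(x_0-) = 1 - \tilde F(-x_0+)$ and $F(x_0+) = 1 - \tilde F(-x_0-)$. Accordingly, the four boundary values $u_-,u_l,u_u,u_+$ used in \eqref{eq-phi-ext} correspond to $1-w_+,1-w_u,1-w_l,1-w_-$ respectively, and the affine piece $\dfrac{u}{F_Z(x_0)}$ translates, after the outer subtraction from $1$, into $\dfrac{w - F_Z(y_0)}{1 - F_Z(y_0)}$. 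This bookkeeping, careful but mechanical, establishes the identity \eqref{eq-tilde-phi-chi} piecewise.

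Once this equivalence is in hand, items (i), (ii), (iv) follow immediately from Proposition \ref{prop:marshall}(i), (ii), (iv) applied to $\tilde\marphi$, because $w\mapsto 1-w$ is a continuous strictly decreasing bijection of $[0,1]$, and two order reversals cancel to preserve monotonicity of $\mmpsi$. For (iii), Proposition \ref{prop:marshall}(iii) gives $\tilde\marphi(\tilde K(-y)) = \tilde F_Y(-y)$ whenever $\tilde K(-y) > 0$, which by $\tilde K(-y) = 1 - K(y)$ is equivalent to $K(y) < 1$; then $\mmpsi(K(y)) = 1 - \tilde\marphi(1 - K(y)) = 1 - \tilde F_Y(-y) = F_Y(y)$, as required. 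For (v) I would invoke the relation $\dfrac{1}{\mmpsi_*(w)} = 1 - \dfrac{1}{\tilde\marphi^*(1-w)}$ already displayed just above the statement: Proposition \ref{prop:marshall}(v) applied to $\tilde\marphi$ gives $\tilde\marphi^*$ non-increasing, so $\tilde\marphi^*(1-w)$ is non-decreasing in $w$, whence $1/\tilde\marphi^*(1-w)$ is non-increasing, $1 - 1/\tilde\marphi^*(1-w) = 1/\mmpsi_*(w)$ is non-decreasing, and consequently $\mmpsi_*$ is non-increasing.

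The only delicate step is the first one, the piecewise matching of \eqref{eq-chi-ext} with $1 - \tilde\marphi(1-w)$, since one must track correctly how the one-sided limits and the four threshold values transform under the reversal; I expect this to be the main obstacle, but once it is settled the five properties transfer to $\mmpsi$ with essentially no additional work.
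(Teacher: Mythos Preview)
Your proposal is correct and follows essentially the same approach as the paper: the text preceding the corollary already sets up the reverse-distribution substitution $\tilde F(x)=1-F(-x)$, the identity $\tilde K=\tilde F_Y\tilde F_Z$, the relation \eqref{eq-tilde-phi-chi}, and the formula linking $\mmpsi_*$ to $\tilde\marphi^*$, so the corollary is stated without a separate proof and is meant to be read off from Proposition~\ref{prop:marshall} exactly as you do. Your write-up simply makes explicit the bookkeeping (matching of $u_-,u_l,u_u,u_+$ with $1-w_+,1-w_u,1-w_l,1-w_-$ and the affine piece) that the paper leaves implicit.
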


\begin{lem}\label{lem-chi-order}
	Let $F'_Y\leqslant F_Y$ and $F_Z$ be given, and let $K=F_Y + F_Z - F_YF_Z$ and $K'=F'_Y + F'_Z - F'_YF'_Z$. Then $\mmpsi' \leqslant \mmpsi$, where $\mmpsi'$ and $\mmpsi$ are defined by applying \eqref{eq-chi-ext} to $F'_Y$ and $F_Y$ respectively.
\end{lem}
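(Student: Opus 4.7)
The natural strategy is to reduce this lemma to Lemma~\ref{lem-phi-order} via the reverse-distribution-function transformation $\tilde F(x) = 1 - F(-x)$, exploiting the identity \eqref{eq-tilde-phi-chi} that expresses $\mmpsi$ in terms of the corresponding Marshall-type generator of the reversed data. This avoids repeating the intricate case analysis from the proof of Lemma~\ref{lem-phi-order}.

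The plan is to proceed in the following steps. First, I will record the order behavior of the tilde operation: since $F \mapsto \tilde F$ reverses order (smaller $F$ becomes larger $\tilde F$), the hypothesis $F'_Y \leqslant F_Y$ is equivalent to $\tilde{F_Y} \leqslant \tilde{F'_Y}$. Second, I will use the factorization $\tilde K = \tilde F_Y \tilde F_Z$ (noted in Subsection~\ref{s-ormm}), which likewise gives $\tilde{K'} = \tilde{F'_Y} \tilde F_Z$; thus the reversed data sits exactly in the setting of Subsection~\ref{s-ormc}, with $\tilde F_Y$ (respectively $\tilde{F'_Y}$) playing the role of $F_X$ (respectively $F'_X$), and $\tilde F_Z$ playing the role of $F_Z$. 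Third, I will apply Lemma~\ref{lem-phi-order} to this reversed data. Writing $\tilde \marphi$ and $\tilde{\marphi}'$ for the Marshall generators produced from $\tilde F_Y, \tilde F_Z$ and $\tilde{F'_Y}, \tilde F_Z$ respectively via formula \eqref{eq-phi-ext}, the lemma yields
\[
\tilde \marphi \leqslant \tilde{\marphi}'.
\]

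Fourth, I will translate this inequality back using relation \eqref{eq-tilde-phi-chi}, namely $\mmpsi(w) = 1 - \tilde \marphi(1-w)$ and analogously $\mmpsi'(w) = 1 - \tilde{\marphi}'(1-w)$; subtracting and using the inequality above gives
\[
\mmpsi'(w) - \mmpsi(w) = \tilde \marphi(1-w) - \tilde{\marphi}'(1-w) \leqslant 0,
\]
which is the desired conclusion $\mmpsi' \leqslant \mmpsi$.

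The main obstacle, and the only step that needs real verification rather than bookkeeping, is ensuring that formula \eqref{eq-chi-ext} for $\mmpsi$ really coincides with $1 - \tilde\marphi(1-\,\cdot\,)$ obtained by applying \eqref{eq-phi-ext} to the reversed data. This was asserted in Subsection~\ref{s-ormm} but is worth checking piecewise: under the substitutions $w = 1-u$, $y_0 = -x_0$, the breakpoints $w_-, w_l, w_u, w_+$ correspond to $1 - u_+, 1 - u_u, 1-u_l, 1-u_-$ computed for $\tilde K = \tilde F_Y \tilde F_Z$, the constant pieces $F_Y(y_0\pm)$ correspond to $1 - \tilde F_Y(-y_0\mp) = 1 - \tilde F_X(x_0\mp)$, and the linear middle piece $(w-F_Z(y_0))/(1-F_Z(y_0))$ equals $1 - (1-w)/\tilde F_Z(x_0)$, matching the $u/F_Z(x_0)$ piece of \eqref{eq-phi-ext} after applying $1 - \tilde\marphi(1-w)$. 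Once this compatibility is confirmed, the lemma follows from Lemma~\ref{lem-phi-order} by the order-reversal argument above without any further computation.
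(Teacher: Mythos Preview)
Your proposal is correct and follows exactly the paper's approach: the paper's proof is the same one-line reduction to Lemma~\ref{lem-phi-order} via the tilde transformation and relation~\eqref{eq-tilde-phi-chi}. Your added piecewise verification of~\eqref{eq-tilde-phi-chi} goes beyond what the paper spells out in the proof (the paper simply cites the identity established earlier in the subsection), but it is consistent with the paper's setup; one small remark is that when you assign roles for Lemma~\ref{lem-phi-order}, the larger of the two reversed distributions is $\tilde{F'_Y}$, so it should play the role of $F_X$ there---your stated conclusion $\tilde\marphi \leqslant \tilde\marphi'$ is nonetheless the correct one.
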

\begin{proof}
	Note that $F'_Y \leqslant F_Y$ implies $\tilde F'_Y \geqslant \tilde F_Y$, and therefore, by Lemma~\ref{lem-phi-order}, $\tilde \marphi'\geqslant \tilde \marphi$, and by the relation \eqref{eq-tilde-phi-chi} this implies that $\mmpsi'\leqslant \mmpsi$.
\end{proof}

\textbf{Observations}:
\begin{enumerate}
  \item The generating function $\mmphi$ is treated simply via the methods of Subsection \ref{s-ormc}.
  \item We see that using Corollary \ref{cor} and Observation 1 we conclude the validity of Proposition \ref{prop-maxmin-properties} in the imprecise setting.
  \item We conclude also in this case that maxmin copulas as objects are the same in the imprecise setting as they were in the classical case, we only need to be careful about the definition of the generators, while their order is being taken care of by Lemma \ref{lem-chi-order}.
\end{enumerate}

\subsection{Associated generating functions }
Let $F_X,F_Y$ and $F_Z$, and also $F, G$ and $K$ be as in Subsections~\ref{s-ormc} and \ref{s-ormm}, and let $\marphi, \marpsi$, and $\mmpsi\colon [0, 1]\to [0, 1]$ be such that $\marphi(F) = F_X, \marpsi(G) = F_Y$ and $\mmpsi(K) = F_Y$. If they also satisfy the corresponding Conditions (P1)--(P3) and (F1)--(F3), then we will say that the triple $(\marphi,\marpsi,\mmpsi)$ is \emph{associated} to the corresponding $F, G$, and $K$ given $F_Z$, or simply that it is \emph{associated} to the triple $(F_X, F_Y, F_Z)$. We will also say that a single function $\marphi, \marpsi$ or $\mmpsi$ is associated to a triple $(F_X, F_Y, F_Z)$ if it satisfies the above conditions. Note that in general there may be multiple generating functions associated to some triple of distribution functions. Namely, the above requirements only determine their values on the images of the corresponding distribution functions.

Let a triple of distribution functions $(F_X, F_Y, F_Z)$ be given and let $\mathbf \Phi$, respectively $\mathbf{\Psi}$, respectively $\mathbf{X}$, be sets of generating functions $\marphi$, respectively $\marpsi$, respectively $\mmpsi$, so that all triples $(\marphi, \marpsi, \mmpsi)$ are associated to $(F_X, F_Y, F_Z)$.
 Denote $\marphi_{\mathrm{min}} = \inf_{\marphi\in\Phi} \marphi$ and $\marphi_{\mathrm{max}} = \sup_{\marphi\in\Phi} \marphi$ and adjoin these two functions to $\mathbf{\Phi}$ without changing its notation (this might be an abuse of notation in case that the two functions had not belonged to the set to start with). Similarly, we adjoin  $\marpsi_{\mathrm{min}}$ and $\marpsi_{\mathrm{max}}$ the respective infimum and supremum of the set $\mathbf \Psi$  to it, and $\chi_{\mathrm{min}}$ and $\chi_{\mathrm{max}}$ the respective infimum and supremum of the set $\mathbf X$ to it.
\begin{prop}\label{prop-inf-sup-phi}
	Every triple $\{(\marphi,\marpsi,\mmpsi);\marphi\in\mathbf{\Phi}, \marpsi\in\mathbf{\Psi}, \mmpsi\in\mathbf{X}\}$ is associated to the triple $(F_X,F_Y,F_Z)$.
\end{prop}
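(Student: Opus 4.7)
The task reduces to a component-wise verification. Since a triple $(\marphi,\marpsi,\mmpsi)$ is associated to $(F_X,F_Y,F_Z)$ exactly when each component satisfies the respective defining relation and the respective set of properties (there is no cross-coupling in the definition of association), it suffices to show that each of the six boundary functions $\marphi_{\mathrm{min}},\marphi_{\mathrm{max}},\marpsi_{\mathrm{min}},\marpsi_{\mathrm{max}},\mmpsi_{\mathrm{min}},\mmpsi_{\mathrm{max}}$ is itself an associated generator. This is what I would carry out explicitly.

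First I would dispose of the defining relations. Every $\marphi\in\mathbf{\Phi}$ (before adjoining the extremal ones) satisfies $\marphi(F(x))=F_X(x)$ for every $x$ with $F(x)>0$, which means that all elements of $\mathbf{\Phi}$ coincide on $\mathrm{im}\,F\cup\{0\}$; hence their pointwise infimum $\marphi_{\mathrm{min}}$ and supremum $\marphi_{\mathrm{max}}$ also coincide with each $\marphi$ on $\mathrm{im}\,F\cup\{0\}$, and the relation $\marphi_{\mathrm{min}}(F)=\marphi_{\mathrm{max}}(F)=F_X$ follows. The identical argument, applied to $\mathrm{im}\,G\cup\{0\}$ and to $\mathrm{im}\,K\cup\{0\}$, takes care of $\marpsi_{\mathrm{min}/\mathrm{max}}$ and $\mmpsi_{\mathrm{min}/\mathrm{max}}$.

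Next I would verify (P1)--(P3) and (F1)--(F3). The boundary values in (P2) and (F1) are trivially preserved since every function in the respective set shares them. Monotonicity in (P1) and (F2) is preserved because a pointwise infimum or supremum of non-decreasing functions is non-decreasing. The only delicate point is (P3) (and the analogous non-increasing condition for $\mmpsi_*$ in (F3)). For $\marphi$ this is easy: for fixed $u>0$,
\[
\marphi_{\mathrm{min}}^*(u)=\frac{\marphi_{\mathrm{min}}(u)}{u}=\inf_{\marphi\in\mathbf{\Phi}}\frac{\marphi(u)}{u}=\inf_{\marphi\in\mathbf{\Phi}}\marphi^*(u),
\]
which is non-increasing as a pointwise infimum of non-increasing functions; the analogous identity with $\sup$ handles $\marphi_{\mathrm{max}}^*$, and the $\marpsi$ case is symmetric.

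The main (and only) obstacle is the $\mmpsi_*$ condition in (F3), because the map $\mmpsi\mapsto\mmpsi_*$ is not merely division by a constant. The key observation is that, for fixed $w\in(0,1)$, the function
\[
t\longmapsto\frac{1-t}{w-t}=1+\frac{1-w}{w-t}
\]
is strictly increasing in $t$ on $[0,w)$. Applied with $t=\mmpsi(w)$ this gives that $\mmpsi_*(w)$ is a monotone function of $\mmpsi(w)$, so
\[
\mmpsi_{\mathrm{min},*}(w)=\inf_{\mmpsi\in\mathbf{X}}\mmpsi_*(w),\qquad \mmpsi_{\mathrm{max},*}(w)=\sup_{\mmpsi\in\mathbf{X}}\mmpsi_*(w).
\]
Each $\mmpsi_*$ is non-increasing in $w$ by assumption, so these pointwise infimum and supremum are likewise non-increasing, establishing (F3) for the extremal generators. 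Assembling the three component verifications then yields the proposition: for any choice of $\marphi\in\mathbf{\Phi}$, $\marpsi\in\mathbf{\Psi}$, $\mmpsi\in\mathbf{X}$ from the enlarged sets, the triple $(\marphi,\marpsi,\mmpsi)$ is associated to $(F_X,F_Y,F_Z)$.
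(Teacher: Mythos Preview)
Your proof is correct and follows essentially the same approach as the paper: reduce to checking the extremal generators, preserve monotonicity under pointwise infima/suprema, observe that $\marphi_{\mathrm{min}}^*=\inf\marphi^*$ (and similarly for the others), and note that the defining relations survive because all generators agree on the relevant images. You are in fact more explicit than the paper on the one nontrivial point---the $\mmpsi_*$ case---where you spell out that $t\mapsto(1-t)/(w-t)$ is increasing on $[0,w)$ so that $(\mmpsi_{\mathrm{min}})_*=\inf\mmpsi_*$ and $(\mmpsi_{\mathrm{max}})_*=\sup\mmpsi_*$; the paper merely asserts that a ``similar argument'' applies.
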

\begin{proof}
	It only remains to show that $(\marphi_{\mathrm{min}}, \marpsi_{\mathrm{min}}, \mmpsi_{\mathrm{min}})$ and   $(\marphi_{\mathrm{max}}, \marpsi_{\mathrm{max}}, \mmpsi_{\mathrm{max}})$ are associated to $(F_X,F_Y,F_Z)$ if all other elements of the sets $\mathbf \Phi, \mathbf \Psi$ and $\mathbf X$ are.
	
	As an infimum or supremum of any family of increasing functions is increasing, $\marphi_{\mathrm{min}}$ and $\marphi_{\mathrm{max}}$ are increasing, and so are $\marphi^*_{\mathrm{min}}(u) =  \dfrac{\marphi_{\mathrm{min}}(u)}{u}$ and $\marphi^*_{\mathrm{max}}(u) = \dfrac{\marphi_{\mathrm{max}}(u)}{u}$, since $\phi^*_{\mathrm{min}} = \inf_{\phi\in \mathbf \Phi}\phi^*$ and ${\phi}^*_{\mathrm{max}} = \inf_{\phi\in \mathbf \Phi}\phi^*$. Similar argument can be used for ${\chi_*}_{\mathrm{min}}(w) = \dfrac{1-\mmpsi_{\mathrm{min}}(w)}{w-\mmpsi_{\mathrm{min}}(w)}$ and ${\chi_*}_{\mathrm{max}}(u) = \dfrac{1-\mmpsi_{\mathrm{max}}(w)}{w-\mmpsi_{\mathrm{max}}(w)}$. 	
	
	It is also straightforward  to see that ${\marphi_{\mathrm{min}}}(F)={\marphi_{\mathrm{max}}}(F)=\marphi(F) =F_X$, for every $\marphi \in \mathbf \Phi$ and similarly for $\mmpsi_{\mathrm{min}}$ and ${\mmpsi}_{\mathrm{max}}$.
\end{proof}

\section{Imprecise Marshall's copulas and maxmin copulas}\label{sec:imprecise}

We are now in position to extend the notion of Marshall's copulas and maxmin copulas to the imprecise probability setting. There is no unique way to do it. One might want to consider imprecise copulas of some kind and insert imprecise marginals into them. However, this approach may not lead to the desired solution, since the main point of these families of copulas is that they are induced by shock models. We want to extend these two notions so that this main property would remain true in the imprecise setting as well. More precisely, if a bivariate distribution $\marcop_{{\marphi}, {\marpsi}}(F, G)$ describes a shock model of Marshall type, then we want to have something along the line of Proposition~\ref{prop-marshall-properties}, especially Condition (iii), to hold; and similarly for the case of maxmin copulas. Applying an imprecise copula representing a set of precise copulas to marginals given in terms of $p$-boxes, would then correspond to applying a set of copulas to a set of marginals, whereas only some of the obtained models would have interpretation in terms of shock models.

These are the reasons why we decided for a different approach. Instead of constructing a general abstract imprecise copula, which would  correspond to cases of interest only in some selected cases, we allow imprecision in the underlying shock models, and then analyse how the obtained model relates to the theory of imprecise copulas. The shocks that we denote by $X, Y$ and $Z$ will now be allowed to have imprecise distribution functions given in terms of $p$-boxes. In fact, for technical reasons, we will only allow $X$ and $Y$ to have imprecise distributions, while $Z$ will still have a precise distribution function\footnote{It has been seen in Subsection \ref{s-ormc} respectively \ref{s-ormm} how difficult it is to define the generating function $\marphi$ respectively $\mmpsi$ so to maintain its order. If we let $Z$ imprecise as well and let, say, $F_Z \le F'_Z$ and $F_X \le F'_X$ then it would be hard to expect that $\marphi \le \marphi'$ in general or vice-versa, no matter what definition of this generator we choose fulfilling the other conditions. So, maintaning the definition and order of the generators and consequently  the structure of (bivariate) $p$-boxes seems to be a much greater challenge in this case.}. So, we let $(\low F_X, \up F_X)$ and $(\low F_Y, \up F_Y)$ be $p$-boxes describing the knowledge about the distribution of variables $X$ and $Y$. The precise distribution function of $Z$ is denoted by $F_Z$.

Denote $\low F = \low F_X F_Z, \up F = \up F_X F_Z, \low G = \low F_Y F_Z, \up G = \up F_X F_Z, \low K = \low F_Y + F_Z  - \low F_YF_Z$ and $\up K = \up F_Y + F_Z  - \up F_YF_Z$. Let

\begin{equation*}\label{eq-low-phi}
  \begin{split}
	\low{\phi} & = \inf\{ \phi \colon \phi \text{ is associated to $\low F$ given $F_Z$} \} \\
	\up{\phi} & = \sup\{ \phi \colon \phi \text{ is associated to $\up F$ given $F_Z$} \}.
\end{split}
\end{equation*}
By Proposition~\ref{prop-inf-sup-phi}, $\low{\phi}$ is the minimal function associated to $\low F$ and $\up \phi$ the maximal function associated to $\up F$. Similarly we define $\low{\psi}, \up{\psi}, \low{\chi}$ and $\up{\chi}$.

The following proposition is an easy consequence of Lemmas~\ref{lem-phi-order} and \ref{lem-chi-order}.

\begin{prop}\label{prop-order}
	Let $(\low F_X, \up F_X)$ and $(\low F_Y, \up F_Y)$ be $p$-boxes representing the available information on the distribution functions of random variables $X$ and $Y$, and $F_Z$ the distribution function for $Z$. Further let $F_X$ and $F_Y$ be distribution functions such that $\underline F_X \leqslant  F_X \leqslant  \overline F_X$ and $\underline F_Y \leqslant  F_Y \leqslant  \overline F_Y$. Denote also $F = F_X F_Z, G = F_YF_Z$ and $K = F_Y + F_Z  - F_YF_Z$. Then:
	\begin{enumerate}[(i)]
		\item $\low F \leqslant F \leqslant \up F, \low G \leqslant G \leqslant \up G$ and $\low K \leqslant K \leqslant \up K$;
		\item There exist $\phi, \psi$ and $\chi\colon [0, 1]\to [0, 1]$ associated to $F, G$ and $K$ respectively, such that $\low \phi \leqslant \phi \leqslant \up \phi, \low \psi \leqslant \psi \leqslant \up \psi$ and $\low \chi \leqslant \chi \leqslant \up \chi$.
	\end{enumerate}	
\end{prop}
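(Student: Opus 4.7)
My plan is to prove \emph{(i)} by a direct monotonicity argument and \emph{(ii)} by exhibiting explicit canonical witnesses built from the formulas \eqref{eq-phi-ext} and \eqref{eq-chi-ext}, whose membership in the associated sets comes from Proposition \ref{prop:marshall} / Corollary \ref{cor} and whose order properties come from Lemmas \ref{lem-phi-order} and \ref{lem-chi-order}.

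For part \emph{(i)}, since $F_Z$ is nonnegative, multiplying the chain $\underline F_X \leqslant F_X \leqslant \overline F_X$ by $F_Z$ immediately gives $\underline F \leqslant F \leqslant \overline F$, and the same argument yields $\underline G \leqslant G \leqslant \overline G$. For $K$, I would rewrite $K = F_Y + F_Z - F_YF_Z = F_Y(1-F_Z) + F_Z$; since $1-F_Z \geqslant 0$, $K$ is non-decreasing in $F_Y$ pointwise, which gives $\underline K \leqslant K \leqslant \overline K$.

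For part \emph{(ii)}, let $\phi$ be the function defined by applying \eqref{eq-phi-ext} to the pair $(F_X, F_Z)$. Proposition \ref{prop:marshall} states that this $\phi$ is associated to $F=F_XF_Z$ and satisfies conditions (P1)--(P3), so it is a legitimate generating function. To compare it with $\underline\phi$, I apply Lemma \ref{lem-phi-order} to $\underline F_X \leqslant F_X$: letting $\underline\phi_{c}$ denote the function obtained from \eqref{eq-phi-ext} applied to $(\underline F_X, F_Z)$, the lemma yields $\underline\phi_{c} \leqslant \phi$. Since $\underline\phi_{c}$ is associated to $\underline F$ and $\underline\phi$ is the pointwise infimum of all such associated functions, $\underline\phi \leqslant \underline\phi_{c} \leqslant \phi$. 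The inequality $\phi \leqslant \overline\phi$ is obtained symmetrically from $F_X \leqslant \overline F_X$ and the supremum definition of $\overline\phi$.

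The argument for $\psi$ is identical after invoking Observation 1 following Lemma \ref{lem-phi-order}, which states that the whole Marshall analysis goes through verbatim with $(F_Y, F_Z, G)$ in place of $(F_X, F_Z, F)$. For $\chi$, I would define it via \eqref{eq-chi-ext} from $(F_Y, F_Z)$, so that it is associated to $K$ by Corollary \ref{cor}, and then use Lemma \ref{lem-chi-order} in place of Lemma \ref{lem-phi-order} to squeeze $\underline\chi \leqslant \chi \leqslant \overline\chi$ by the same infimum/supremum argument. I do not expect a real obstacle here; the heavy lifting was already done in Section \ref{sec:order}, and the only thing that needs care is remembering that the bounds $\underline\phi,\overline\phi,\underline\psi,\overline\psi,\underline\chi,\overline\chi$ are defined as envelopes over the full associated sets, so one must exhibit a canonical candidate in each set (namely the one from \eqref{eq-phi-ext} or \eqref{eq-chi-ext}) to invoke the infimum/supremum inequalities rather than try to compare $\phi$ directly with the envelopes.
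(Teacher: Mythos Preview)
Your proof is correct and follows the same approach the paper intends: the paper merely states that the proposition is an easy consequence of Lemmas~\ref{lem-phi-order} and \ref{lem-chi-order}, and your argument is precisely the natural way to unpack that claim, using the canonical constructions \eqref{eq-phi-ext} and \eqref{eq-chi-ext} as witnesses and sandwiching them via the infimum/supremum definitions of $\low\phi,\up\phi,\low\psi,\up\psi,\low\chi,\up\chi$.
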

\subsection{Imprecise Marshall's copulas}
Proposition~\ref{prop-order} suggests that the dependence of random variables $U=\max\{ X, Z\}$ and $V=\max\{ Y, Z \}$ can be modeled by a family of copulas whose order corresponds to the order of the distribution functions of $X$ and $Y$. Thus, if the distributions are modeled by $p$-boxes $(\low F_X, \up F_X)$ and $(\low F_Y, \up F_Y)$, the corresponding functions $\marphi$ and $\marpsi$ belong to intervals of the form $(\low \marphi, \up{\marphi})$ and $(\low \marpsi, \up{\marpsi})$. This justifies the following definition.
\begin{defn}\label{def-i-mar-cop}
	The family of copulas
	\begin{equation}\label{eq-i-mar-cop}
		\marsetcop = \{ \marcop_{\marphi, \marpsi} \colon \low{\marphi}\leqslant  \marphi \leqslant  \up{\marphi}, \low{\marpsi}\leqslant  \marpsi \leqslant  \up{\marpsi}\},
	\end{equation}
	where $\low{\marphi} \leqslant  \up\marphi$ and $\low{\marpsi}\leqslant  \up{\marpsi}$, and all $\marphi$ and $\marpsi$, including the bounds, satisfy conditions (P1)--(P3),
	is called an \emph{imprecise Marshall's copula}.
\end{defn}
\begin{prop}
	Let $\marsetcop$ be an imprecise Marshall's copula of the form \eqref{eq-i-mar-cop}. Then it contains the minimal and the maximal element with respect to pointwise ordering:
	\begin{align*}
	\min_{\marcop_{\marphi, \marpsi}\in\marsetcop}\marcop_{\marphi, \marpsi}  & = \marcop_{\low\marphi, \low\marpsi}; \\
	\max_{\marcop_{\marphi, \marpsi}\in\marsetcop}\marcop_{\marphi, \marpsi}  & = \marcop_{\up\marphi, \up\marpsi},
	\end{align*}
	i.e. $\marcop_{\low\marphi, \low\marpsi}(u, v) \leqslant \marcop_{\marphi, \marpsi} (u, v)  \leqslant \marcop_{\up\marphi, \up\marpsi}(u, v)$ for every copula $\marcop_{\marphi, \marpsi}\in \marsetcop$ and every $u, v\in [0, 1]$.
\end{prop}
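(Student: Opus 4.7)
The plan is to exploit the simple observation that for $uv>0$ the Marshall copula can be rewritten as
\[
\marcop_{\marphi,\marpsi}(u,v) = \min\{v\marphi(u),\, u\marpsi(v)\},
\]
by pulling the factor $uv$ inside the minimum (since $uv\cdot \marphi(u)/u = v\marphi(u)$ and symmetrically). Once in this form, the result is essentially a coordinate-wise monotonicity statement for the $\min$ operation.

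First I would verify that the two candidate extremal copulas actually belong to the family $\marsetcop$. By Definition~\ref{def-i-mar-cop} the bounding functions $\low\marphi,\up\marphi,\low\marpsi,\up\marpsi$ are required to satisfy (P1)--(P3), so both $\marcop_{\low\marphi,\low\marpsi}$ and $\marcop_{\up\marphi,\up\marpsi}$ are genuine Marshall copulas, and the inclusions $\low\marphi\le\low\marphi\le\up\marphi$ and $\low\marpsi\le\low\marpsi\le\up\marpsi$ (and likewise with $\up$) are trivial. Hence both belong to $\marsetcop$.

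Second, for any fixed $(u,v)\in[0,1]\times[0,1]$ with $uv>0$ and any $\marcop_{\marphi,\marpsi}\in\marsetcop$, multiplying the pointwise inequalities $\low\marphi(u)\le\marphi(u)\le\up\marphi(u)$ by $v\ge 0$ and $\low\marpsi(v)\le\marpsi(v)\le\up\marpsi(v)$ by $u\ge 0$ gives
\[
v\low\marphi(u)\le v\marphi(u)\le v\up\marphi(u),\qquad u\low\marpsi(v)\le u\marpsi(v)\le u\up\marpsi(v).
\]
Since $\min$ is monotone in each coordinate, this yields
\[
\min\{v\low\marphi(u),\,u\low\marpsi(v)\}\le\min\{v\marphi(u),\,u\marpsi(v)\}\le\min\{v\up\marphi(u),\,u\up\marpsi(v)\},
\]
which is precisely
\[
\marcop_{\low\marphi,\low\marpsi}(u,v)\le\marcop_{\marphi,\marpsi}(u,v)\le\marcop_{\up\marphi,\up\marpsi}(u,v).
\]
For the remaining case $uv=0$, condition (C1) in Definition~\ref{imprecise copula} forces all three values to equal $0$, and the desired inequality is trivial.

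There is no real obstacle here; the content of the proposition reduces to the fact that $\marcop_{\marphi,\marpsi}$ is pointwise non-decreasing in each generator, a property inherited from the monotonicity of $\min$ in its arguments. The only thing worth stressing when writing the proof is that the reformulation $\marcop_{\marphi,\marpsi}(u,v)=\min\{v\marphi(u),u\marpsi(v)\}$ (for $uv>0$) is what makes the argument transparent, since in the original form $uv\min\{\marphi(u)/u,\marpsi(v)/v\}$ one is multiplying by the positive factor $uv$ before taking the min, which would obscure the coordinate-wise monotonicity of interest.
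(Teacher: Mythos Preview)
Your proof is correct and follows essentially the same approach as the paper: both arguments reduce the claim to the coordinate-wise monotonicity of $\min$ applied to the generators. The only cosmetic difference is that the paper works directly with the form $uv\min\{\marphi(u)/u,\marpsi(v)/v\}$ and observes that $\marphi\leqslant\marphi'$ immediately gives $\marphi(u)/u\leqslant\marphi'(u)/u$, so the rewriting you perform is not actually needed (and your closing remark that the original form ``obscures'' the monotonicity is a slight overstatement).
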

\begin{proof}
	We only need to prove that the order on the generating functions translates to the order on the copulas. So, take some $\marphi\leqslant \marphi'$ and $\marpsi\leqslant \marpsi'$ and calculate:
	\begin{equation}\label{eq-order-marcop}
		\marcop_{\marphi, \marpsi}(u, v) = uv\min\left\{ \frac{\marphi(u)}{u}, \frac{\marpsi(v)}{v} \right\} \leqslant  uv\min\left\{ \frac{\marphi'(u)}{u}, \frac{\marpsi'(v)}{v} \right\} = \marcop_{\marphi', \marpsi'}(u, v).
	\end{equation}
	Taking the minimal and maximal functions respectively, thus clearly gives the minimal and the maximal Marshall's copula of $\marsetcop$.
\end{proof}
\begin{rem}\label{rem:marshall}
  Observe that in this case the set of copulas of Equation \eqref{eq-i-mar-cop} actually contains the lower and the upper bound so that the two bounds are necessarily copulas unlike in the general case of Equation \eqref{intermediate} where we may encounter a problem mentioned immediately following that equation. Note however, that not every copula lying between $\marcop_{\low{\marphi}, \low{\marpsi}}$ and $\marcop_{\up{\marphi}, \up{\marpsi}}$ is necessarily a Marshall's copula.
\end{rem}

\begin{cor}\label{cormar}
  The pair of copulas $(\marcop_{\low\marphi, \low\marpsi},\marcop_{\up\marphi, \up\marpsi})$ is an imprecise copula in the sense of Montes et al.\ \cite{montes2015} satisfying Condition (C).
\end{cor}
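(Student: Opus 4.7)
The plan is to verify two things independently: first, that $(\marcop_{\low\marphi, \low\marpsi},\marcop_{\up\marphi, \up\marpsi})$ meets the three clauses in the definition of an imprecise copula (Subsection \ref{ss:imprecise}); and second, that it satisfies the stronger coherence Condition (C), i.e.\ that both bounds are recovered as the pointwise infimum and supremum over the set of copulas lying between them. The key enabling observation is Remark \ref{rem:marshall}: both $\marcop_{\low\marphi, \low\marpsi}$ and $\marcop_{\up\marphi, \up\marpsi}$ are themselves genuine copulas, and by \eqref{eq-order-marcop} they are pointwise ordered. This rules out at once the pathology (empty intermediate set) flagged in the introduction for general imprecise copulas.

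Abbreviate $\low C := \marcop_{\low\marphi, \low\marpsi}$ and $\up C := \marcop_{\up\marphi, \up\marpsi}$. Clauses (i) and (ii) of the imprecise copula definition follow immediately from (C1)--(C2) applied to $\low C$ and $\up C$ separately, since both are copulas. For the four rectangle inequalities (IC1)--(IC4) I would use a single recurring trick: add a nonnegative quantity of the form $\up C(a,b) - \low C(a,b) \geq 0$ to the 2-increasing inequality (C3) for one of the two bounds. For instance, (IC1) rearranges as
\[
\bigl[\low C(u_2,v_2) - \low C(u_2,v_1) - \low C(u_1,v_2) + \low C(u_1,v_1)\bigr] + \bigl[\up C(u_1,v_1) - \low C(u_1,v_1)\bigr] \geq 0,
\]
where the first bracket is nonnegative by (C3) for $\low C$ and the second by the pointwise order. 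The remaining three inequalities are handled analogously, each time pairing the 2-increasing property of the suitable bound with $\low C \leq \up C$ applied at a carefully chosen corner.

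For Condition (C), let $\mathcal K = \{C : C \text{ is a copula with } \low C \leq C \leq \up C\}$. By Remark \ref{rem:marshall} both $\low C$ and $\up C$ belong to $\mathcal K$, so $\mathcal K$ is nonempty. The pointwise infimum $\inf_{C\in\mathcal K} C$ is at most $\low C$ (since $\low C$ itself lies in $\mathcal K$) and at least $\low C$ (since every $C \in \mathcal K$ satisfies $C \geq \low C$ by definition of $\mathcal K$), so the two coincide; the analogous argument at the top yields $\sup_{C\in\mathcal K} C = \up C$.

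I anticipate no real obstacle here: the only non-routine ingredient, namely that $\mathcal K$ actually contains its prescribed envelopes, has already been dispatched by Remark \ref{rem:marshall}, after which the proof of Condition (C) collapses to a tautology and the verification of the imprecise-copula axioms reduces to the elementary algebraic manipulation sketched above.
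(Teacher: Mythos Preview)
Your proposal is correct and follows essentially the same route the paper intends: the corollary is stated without proof immediately after the proposition showing $\marcop_{\low\marphi,\low\marpsi}\leqslant\marcop_{\up\marphi,\up\marpsi}$ and Remark~\ref{rem:marshall}, and is meant to follow directly from the fact that both bounds are genuine, pointwise ordered copulas. Your explicit verification of (IC1)--(IC4) via the add-a-nonnegative-term trick and your tautological argument for Condition~(C) simply spell out what the paper leaves implicit (the paper also has the general observation in Subsection~\ref{ss:imprecise} that the pointwise bounds of any nonempty set of copulas form an imprecise copula, which you could have invoked instead of checking (IC1)--(IC4) by hand).
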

\subsection{Imprecise maxmin copulas}
Similarly as above we define an imprecise maxmin copula.
\begin{defn}\label{def-i-mm-cop}
	The family of copulas
	\begin{equation}\label{eq-i-mm-cop}
	\mmsetcop = \{ \mmcop_{\mmphi, \mmpsi} \colon \low{\mmphi}\leqslant  \mmphi \leqslant  \up{\mmphi}, \low{\mmpsi}\leqslant  \mmpsi \leqslant  \up{\mmpsi}\},
	\end{equation}
	where $\low{\mmphi} \leqslant  \up\mmphi$ and $\low{\mmpsi}\leqslant  \up{\mmpsi}$, and all $\mmphi$ and $\mmpsi$, including the bounds, satisfy conditions (F1) -- (F3),
	is called an \emph{imprecise maxmin copula}.
\end{defn}
\begin{prop}\label{prop-maxmin-bounds}
	Let $\mmsetcop$ be an imprecise maxmin copula of the form \eqref{eq-i-mm-cop}. Then it contains the minimal and the maximal elements with respect to pointwise ordering:
	\begin{align*}
	\min_{\mmcop_{\mmphi, \mmpsi}\in\mmsetcop}\mmcop_{\mmphi, \mmpsi} (u, v) & = \mmcop_{\low\mmphi, \up\mmpsi}; \\
	\max_{\mmcop_{\mmphi, \mmpsi}\in\mmsetcop}\mmcop_{\mmphi, \mmpsi} (u, v) & = \mmcop_{\up\mmphi, \low\mmpsi} ,
	\end{align*}
	i.e. $\mmcop_{\low\mmphi, \up\mmpsi} \leqslant \mmcop_{\mmphi, \mmpsi} (u, v)  \leqslant \mmcop_{\up\mmphi, \low\mmpsi}$ for every copula $\mmcop_{\mmphi, \mmpsi}\in \mmsetcop$ and every $u, v\in [0, 1]$.
\end{prop}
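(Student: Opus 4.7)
The plan is to imitate the proof of the corresponding Marshall case by translating the order on generating functions into the order on maxmin copulas, exploiting the explicit formula
\[
\mmcop_{\mmphi, \mmpsi}(u, w) = uw + \min\{u(1-w),\, (\mmphi(u)-u)(w-\mmpsi(w))\}.
\]
First I would observe that condition (F3) forces both $\mmphi(u)-u \geq 0$ (since $\mmphi^*(u) = \mmphi(u)/u \geq 1$) and $w-\mmpsi(w) \geq 0$ (since the denominator of $\mmpsi_*(w) = (1-\mmpsi(w))/(w-\mmpsi(w))$ must be nonnegative for $\mmpsi_*$ to take values in $[1,\infty]$). Hence both factors in the second argument of the $\min$ are nonnegative.

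Next, suppose $\mmphi \leqslant \mmphi'$ and $\mmpsi \leqslant \mmpsi'$ (with all four satisfying (F1)--(F3)). Then for every $(u,w) \in [0,1]^2$,
\[
(\mmphi(u)-u)(w-\mmpsi(w)) \leqslant (\mmphi'(u)-u)(w-\mmpsi(w)),
\]
because the first factor increases while the second (nonnegative) factor is unchanged; monotonicity of $\min$ in its second argument then yields $\mmcop_{\mmphi,\mmpsi}(u,w) \leqslant \mmcop_{\mmphi',\mmpsi}(u,w)$. Analogously,
\[
(\mmphi(u)-u)(w-\mmpsi(w)) \geqslant (\mmphi(u)-u)(w-\mmpsi'(w)),
\]
because the second factor decreases when $\mmpsi$ grows while the first (nonnegative) factor is fixed, so that $\mmcop_{\mmphi,\mmpsi}(u,w) \geqslant \mmcop_{\mmphi,\mmpsi'}(u,w)$. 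In short, $\mmcop_{\mmphi,\mmpsi}$ is pointwise non-decreasing in $\mmphi$ and non-increasing in $\mmpsi$.

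Finally, pick any $\mmcop_{\mmphi,\mmpsi} \in \mmsetcop$. Applying the two monotonicities twice gives
\[
\mmcop_{\low\mmphi,\up\mmpsi}(u,w) \;\leqslant\; \mmcop_{\mmphi,\up\mmpsi}(u,w) \;\leqslant\; \mmcop_{\mmphi,\mmpsi}(u,w) \;\leqslant\; \mmcop_{\up\mmphi,\mmpsi}(u,w) \;\leqslant\; \mmcop_{\up\mmphi,\low\mmpsi}(u,w),
\]
and since both bounding copulas themselves lie in $\mmsetcop$ (their generators satisfy (F1)--(F3) by hypothesis of Definition~\ref{def-i-mm-cop}), they are attained, establishing the claimed minimum and maximum. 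The only conceptual point to be careful about is the role-reversal between $\mmphi$ and $\mmpsi$: contrary to the Marshall setting, here the bounds on $\mmpsi$ exchange roles because the exogenous shock $Z$ acts \emph{beneficially} on the second component through $W=\min\{Y,Z\}$, which is exactly what makes the $\mmpsi$-dependence anti-monotone in the formula; this is the only substantive observation driving the proof.
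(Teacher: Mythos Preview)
Your proof is correct and follows the same approach as the paper: both arguments observe from (F3) that $\mmphi(u)-u\geqslant 0$ and $w-\mmpsi(w)\geqslant 0$, so the second argument of the $\min$ in the defining formula is non-decreasing in $\mmphi$ and non-increasing in $\mmpsi$, whence the extremal copulas are $\mmcop_{\low\mmphi,\up\mmpsi}$ and $\mmcop_{\up\mmphi,\low\mmpsi}$. Your write-up is simply more detailed than the paper's, which compresses the monotonicity argument into a single sentence.
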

\begin{proof}
	Recall the definition $\mmcop_{\mmphi, \mmpsi}(u, v) = uv + \min \{ u(1-v), (\mmphi(u)-u)(v-\mmpsi(v)) \}$, and that $\mmphi(u)-u\geqslant 0$ for every $u$ and $v-\mmpsi(v)\geqslant 0$ for every $v$. It follows immediately that the minimum is attained at the pair $(\low{\mmphi}, \up{\mmpsi})$ and the maximum at the pair $(\up{\mmphi}, \low{\mmpsi})$.
\end{proof}

\begin{cor}\label{cormm}
  The pair of copulas $(\mmcop_{\low\mmphi, \up\mmpsi},\mmcop_{\up\mmphi, \low\mmpsi} )$ is an imprecise copula in the sense of Montes et al.\ \cite{montes2015} satisfying Condition (C).
\end{cor}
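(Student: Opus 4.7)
The plan is to exploit the fact that, by Proposition~\ref{prop-maxmin-bounds}, both candidate bounds $\mmcop_{\low\mmphi,\up\mmpsi}$ and $\mmcop_{\up\mmphi,\low\mmpsi}$ belong to the family $\mmsetcop$ and therefore are themselves genuine (maxmin) copulas, with $\mmcop_{\low\mmphi,\up\mmpsi}\leqslant\mmcop_{\up\mmphi,\low\mmpsi}$ pointwise. This turns the statement into a routine verification: we have a pair $(\low C,\up C)$ of copulas with $\low C\leqslant\up C$, and we must check that such a pair always defines a coherent imprecise copula.

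First, the boundary conditions (i) and (ii) of the definition of an imprecise copula are immediate: each of $\mmcop_{\low\mmphi,\up\mmpsi}$ and $\mmcop_{\up\mmphi,\low\mmpsi}$ is a copula in its own right and therefore satisfies the boundary conditions (C1)--(C2).

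Next I would verify (IC1)--(IC4). The key observation is that each of the four inequalities can be split into the 2-increasing rectangle inequality for one of $\low C$, $\up C$ plus a nonnegative difference of the form $\up C(u_i,v_j)-\low C(u_i,v_j)$. For example, (IC1) reads
\[
\bigl[\low C(u_2,v_2)-\low C(u_2,v_1)-\low C(u_1,v_2)+\low C(u_1,v_1)\bigr]+\bigl[\up C(u_1,v_1)-\low C(u_1,v_1)\bigr]\geqslant 0,
\]
where the first bracket is nonnegative by condition (C3) applied to the copula $\low C$, and the second is nonnegative because $\low C\leqslant\up C$. The remaining three inequalities admit analogous decompositions, so all of (IC1)--(IC4) follow, and the pair is an imprecise copula in the sense of \citet{montes2015}.

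Finally, coherence (Condition (C)) is the easiest step: by Proposition~\ref{prop-maxmin-bounds} both $\mmcop_{\low\mmphi,\up\mmpsi}$ and $\mmcop_{\up\mmphi,\low\mmpsi}$ lie in $\mmsetcop$, hence a fortiori in the set $\{C\colon C\text{ is a copula},\,\mmcop_{\low\mmphi,\up\mmpsi}\leqslant C\leqslant \mmcop_{\up\mmphi,\low\mmpsi}\}$. The infimum of this set is therefore at most $\mmcop_{\low\mmphi,\up\mmpsi}$, while it is at least $\mmcop_{\low\mmphi,\up\mmpsi}$ since every element of the set dominates it; the symmetric argument works for the supremum. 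No genuine obstacle is expected: the corollary reduces cleanly to Proposition~\ref{prop-maxmin-bounds} together with the elementary rectangle decomposition above. The only point requiring a moment's care is remembering that in the maxmin case the pointwise minimal copula corresponds to $(\low\mmphi,\up\mmpsi)$ and the pointwise maximal one to $(\up\mmphi,\low\mmpsi)$, a reversal already accounted for in Proposition~\ref{prop-maxmin-bounds}.
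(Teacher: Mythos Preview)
Your proposal is correct and follows essentially the same approach the paper relies on. The paper states the corollary without proof, the implicit argument being that by Proposition~\ref{prop-maxmin-bounds} the set $\mmsetcop$ contains its pointwise minimum $\mmcop_{\low\mmphi,\up\mmpsi}$ and maximum $\mmcop_{\up\mmphi,\low\mmpsi}$, so the pair arises as the lower and upper envelope of a non-empty set of copulas (hence an imprecise copula, as noted in Subsection~\ref{ss:imprecise}) which moreover attains both bounds (hence coherent); your explicit verification of (IC1)--(IC4) via the rectangle-plus-difference decomposition simply unpacks the first of these two observations.
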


Now we relate the imprecise copulas with the shock models with imprecise underlying distributions, modelled by $p$-boxes.
\begin{prop}\label{prop-independent-product-distribution}
	Let $X$ and $Y$ be independent random variables whose distributions are given imprecisely in terms of $p$-boxes $(\low F_X,  \up{F}_X)$ and $(\low F_Y,  \up{F}_Y)$ respectively. Then
	\begin{enumerate}[(i)]
		\item the distribution function of the random variable $\max\{ X, Y \}$ can be given in terms of the $p$-box $(\low F_X \low F_Y, \up F_X\up F_Y)$;
		\item the distribution function of the random variable $\min\{ X, Y \}$ can be given in terms of the $p$-box $(\low F_X + \low F_Y - \low F_X \low F_Y, \up F_X + \up F_Y- \up F_X \up F_Y)$.
	\end{enumerate}
\end{prop}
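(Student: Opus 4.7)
The plan is to reduce both claims to the classical identities for the distribution functions of $\max$ and $\min$ of independent random variables, and then to verify the requisite monotonicity of those identities in the marginal distribution functions. Once monotonicity is established, the envelope bounds of the $p$-boxes are transported directly to envelope bounds on the aggregate, and tightness comes for free because $\low F_X, \up F_X, \low F_Y, \up F_Y$ are themselves admissible distribution functions in their respective $p$-boxes.

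For part (i), independence gives $P(\max\{X,Y\}\leq z) = F_X(z)\,F_Y(z)$ for every choice of admissible marginals. Since $0\leq \low F_X(z)\leq F_X(z)\leq \up F_X(z)\leq 1$ and similarly for $F_Y$, and since multiplication of non-negative quantities preserves the order, we obtain the pointwise sandwich
\[
\low F_X(z)\,\low F_Y(z) \;\leq\; F_X(z)\,F_Y(z) \;\leq\; \up F_X(z)\,\up F_Y(z).
\]
Choosing $F_X=\low F_X$ and $F_Y=\low F_Y$ (respectively the upper envelopes) shows that these bounds are attained, so that the pair $(\low F_X\low F_Y,\up F_X\up F_Y)$ is exactly the envelope of the image set $\{F_XF_Y\colon F_X\in\mathcal F_{(\low F_X,\up F_X)},\,F_Y\in\mathcal F_{(\low F_Y,\up F_Y)}\}$.

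For part (ii), independence yields
\[
P(\min\{X,Y\}\leq z) \;=\; 1-(1-F_X(z))(1-F_Y(z)) \;=\; F_X(z)+F_Y(z)-F_X(z)F_Y(z).
\]
The key observation is that the function $(a,b)\mapsto a+b-ab$ on $[0,1]^2$ is non-decreasing in each coordinate, since its partial derivatives $1-b$ and $1-a$ are non-negative there. Applying this monotonicity pointwise in $z$ to the marginal bounds gives
\[
\low F_X+\low F_Y-\low F_X\low F_Y \;\leq\; F_X+F_Y-F_XF_Y \;\leq\; \up F_X+\up F_Y-\up F_X\up F_Y,
\]
and tightness follows once again by plugging the envelopes themselves into the formula.

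The only subtle point, and the mildest of obstacles, is to check that the expressions $\low F_X\low F_Y$, $\up F_X\up F_Y$, $\low F_X+\low F_Y-\low F_X\low F_Y$ and $\up F_X+\up F_Y-\up F_X\up F_Y$ are in fact admissible as distribution functions of a univariate $p$-box, i.e.\ non-decreasing with values in $[0,1]$. This is immediate: products and the map $(a,b)\mapsto a+b-ab$ of non-decreasing $[0,1]$-valued functions are again non-decreasing with values in $[0,1]$, which matches the paper's convention that distribution functions are simply monotone maps into $[0,1]$ (no cadlag assumption is imposed in the imprecise setting). Hence no further regularity argument is needed.
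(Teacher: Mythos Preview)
Your argument is correct, but it follows a different route from the paper's own proof. You adopt an \emph{envelope-of-precise-models} viewpoint: fix a precise pair $(F_X,F_Y)$ inside the two $p$-boxes, apply the classical identities for the distribution of $\max$ and $\min$ under independence, and then use coordinatewise monotonicity of $(a,b)\mapsto ab$ and $(a,b)\mapsto a+b-ab$ on $[0,1]^2$ to sandwich the result between the envelopes, with tightness obtained by plugging in the boundary distribution functions themselves. The paper, by contrast, works directly at the level of lower and upper probabilities: for (i) it invokes the factorisation property of the bivariate $p$-box (Definition~\ref{def-factorization}) to write $\low P(\max\{X,Y\}\leqslant x)=\low F(x,x)=\low F_X(x)\low F_Y(x)$, and for (ii) it does not use the monotonicity of $a+b-ab$ at all but instead passes to survival functions via $\min\{X,Y\}=-\max\{-X,-Y\}$ and applies conjugacy $\low P(A)=1-\up P(A^c)$ together with factorisation once more. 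Your approach is arguably more elementary for part (ii), since it avoids the detour through negated variables and conjugacy; on the other hand, the paper's computation makes explicit that the result is a statement about the lower and upper probabilities induced by the factorising joint $p$-box, which ties it more directly to the imprecise-probability framework set up in Section~2.6.
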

\begin{proof}
	From the assumptions we obtain:
	\begin{align*}
		\low P(\max\{X, Y\}\leqslant  x) & = \low P(X\leqslant  x, Y\leqslant  x) \\
		 & = \low F(x, x) \\
		 & = \low F_X(x) \low F_Y(x)
	\end{align*}
	 and similarly for the upper bounds
	 \begin{equation*}
	 	\up P(\max\{X, Y\}\leqslant  x) = \up F_X(x)\up F_Y(x).
	 \end{equation*}
	 Together, the above equalities prove (i).
	
	 Note that $P(-X < -x) = 1-F_X(x) = \hat F(x)$ and therefore $\low P(-X < -x) = 1-\up F_X =: \low{\hat F}_X$. Denote $U = \min\{ X, Y\} = -\max\{-X, -Y\}$.
	 Then we have,
	 \begin{align*}
	 	\low F_U(x) & = \low P(\min\{ X, Y\} \leqslant  x) \\
	 	 & = \low P(\max\{ -X,-Y\} \geqslant  -x)  \\
	 	 & = 1- \up P(\max\{ -X,-Y\} < -x) \\
	 	 & = 1-\up P(-X < -x) \up P(-Y < -x) \\
	 	 & = 1-\hat{\up F}_{X}(x) \hat{\up F}_{Y}(x) \\
	 	 & = 1-(1-\low F_X(x))(1-\low F_Y(x)) \\
	 	 & = \low F_X(x) + \low F_Y(x) - \low F_X(x) \low F_Y(x).
	 \end{align*}
This finishes the proof of (ii).
\end{proof}

\subsection{Application of imprecise Marshall's and maxmin copulas to shock models}
We now describe the shock model for the Marshall's case in the imprecise setting. Let $X$ and $Y$ be random variables, whose distributions are given in terms of $p$-boxes $(\low F_X, \up F_X)$ and $(\low F_Y, \up F_Y)$; and $Z$ a random variable with a precise distribution function $F_Z$. To every triple $(F_X, F_Y, F_Z)$ where $F_X \in \mathcal F_{(\low F_X, \up F_X)}$ and $F_Y\in \mathcal F_{(\low F_Y, \up F_Y)}$, there exist distribution functions $F, G$ and a Marshall's copula $C_{\marphi, \marpsi}$, so that $F$ and $G$ are the distributions of random variables $U = \max\{ X, Z\}$ and $V=\max\{ Y, Z \}$, and $C_{\marphi, \marpsi}(F, G)$ is their joint distribution function.  In particular, we will denote the minimal generating functions associated to the triple $(\low F_X, \low F_Y, F_Z)$ by $\low \marphi$ and $\low \marpsi$, as defined by \eqref{eq-low-phi}; and the corresponding maximal generating functions associated to the triple $(\up F_X, \up F_Y, F_Z)$ by $\up \marphi$ and $\up \marpsi$. Moreover, we will denote by $\low F$ and $\low G$ the distribution functions of $U$ and $V$ respectively corresponding to the triple $(\low F_X, \low F_Y, F_Z)$;  and by $\up F$ and $\up G$ the distribution functions of $U$ and $V$ respectively corresponding to the triple $(\up F_X, \up F_Y, F_Z)$.

\begin{thm}[Properties of imprecise Marshall's copulas]\label{main:marshall}
 In the situation described above we have:
	\begin{enumerate}[(i)]
		\item $\low \marphi \leqslant  \up \marphi$ and $\low{\marpsi}\leqslant  \up{\marpsi}$.
		\item $\low\marphi^*\circ\low F = \low \marpsi^* \circ\low G$ and $\up\marphi^*\circ\up F = \up \marpsi^* \circ\up G$.
		\item $\marcop_{\low{\marphi}, \low{\marpsi}}\leqslant \marcop_{{\marphi}, {\marpsi}}\leqslant \marcop_{\up{\marphi}, \up{\marpsi}}$, where $ \marcop_{{\marphi}, {\marpsi}}$ is the Marshall's copula corresponding to some triple $(F_X, F_Y, F_Z)$, where $F_X \in \mathcal F_{(\low F_X, \up F_X)}$ and $F_Y\in \mathcal F_{(\low F_Y, \up F_Y)}$.
		\item
		\[
			\low F = \low F_X F_Z \ \ \ \ \ \low G  = \low F_Y F_Z \ \ \ \ \
			\up F = \up F_X F_Z \ \ \ \ \  \up G  = \up F_Y F_Z.
		\]
		\item
		\begin{align*}
		\low F_X(x) & = \low{\marphi}(\low F(x)), \text{ if } \low F(x)>0; & \up F_X(x) & = \up{\marphi}(\up F(x)), \text{ if } \up F(x)>0; \\
		\low F_Y(y) & = \low{\marpsi}(\low G(y)), \text{ if } \low G(y)>0; & \up F_Y(y) & = \up{\marpsi}(\up G(y)), \text{ if } \up G(y)>0.
		\end{align*}
		\item $\low F\leqslant \up F$ and $\low G\leqslant \up G$.
		\item The distributions of the random variables $U = \max\{ X, Z \}$ and $V=\max\{ Y, Z\}$ are described with the $p$-boxes $(\low F, \up F)$ and  $(\low G, \up G)$  respectively.
		\item $\marcop_{\low{\marphi}, \low{\marpsi}}(\low F, \low G)\leqslant  \marcop_{\up{\marphi}, \up{\marpsi}}(\up F, \up G)$;
		\item The joint distribution of $(U, V)$ is described by a bivariate $p$-box
		\begin{equation*}
			(\low H, \up H) = (\marcop_{\low{\marphi}, \low{\marpsi}}(\low F, \low G), \marcop_{\up{\marphi}, \up{\marpsi}}(\up F, \up G)).
		\end{equation*}				
	\end{enumerate}	
\end{thm}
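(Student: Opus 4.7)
The theorem compiles nine assertions, most of which are immediate consequences of results already in place; the substance lies in a single monotonicity observation about the Marshall's copula formula. I would verify the nine items essentially in the order given, grouping them by difficulty.

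The trivial ones: (i) is Lemma~\ref{lem-phi-order} applied to $\low F_X \le \up F_X$ (and its analogue for $\marpsi$); (iv) restates the definitions introduced immediately before the theorem; (v) is Proposition~\ref{prop:marshall}(iii) applied to the two precise triples $(\low F_X, F_Z)$ and $(\up F_X, F_Z)$; (vi) is immediate from (iv) together with $F_Z \ge 0$; (vii) is Proposition~\ref{prop-independent-product-distribution}(i) applied to each of the pairs $(X, Z)$ and $(Y, Z)$. Item (ii) is Proposition~\ref{prop-marshall-properties}(v) applied to the two extremal precise triples $(\low F_X, \low F_Y, F_Z)$ and $(\up F_X, \up F_Y, F_Z)$; Observation 2 of Subsection~\ref{s-ormc} affirms that the relevant portion of Proposition~\ref{prop-marshall-properties} remains valid in the present, monotone-only setting.

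The three substantive items (iii), (viii) and (ix) all rely on a single observation: the Marshall's copula formula $\marcop_{\marphi, \marpsi}(u, v) = uv \min\{\marphi(u)/u,\, \marpsi(v)/v\}$ is non-decreasing separately in each of the four arguments $\marphi, \marpsi, u, v$ -- in the generators by the computation \eqref{eq-order-marcop} carried out in the proof just preceding the theorem, and in $u, v$ because $\marcop_{\marphi, \marpsi}$ is a copula and hence non-decreasing in each argument. For (iii) I would combine this with Proposition~\ref{prop-order}(ii), which produces $\low\marphi \le \marphi \le \up\marphi$ and $\low\marpsi \le \marpsi \le \up\marpsi$ for every admissible triple $(F_X, F_Y, F_Z)$. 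For (viii) I would pair (i) with (vi) and invoke the same four-variable monotonicity.

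Finally, (ix) is the synthesis. By Proposition~\ref{prop-marshall-properties}(iv) the joint distribution for any admissible triple $(F_X, F_Y, F_Z)$ is $\marcop_{\marphi, \marpsi}(F, G)$, where $\low\marphi \le \marphi \le \up\marphi$, $\low\marpsi \le \marpsi \le \up\marpsi$, $\low F \le F \le \up F$, $\low G \le G \le \up G$; the four-variable monotonicity then pins it between $\marcop_{\low\marphi, \low\marpsi}(\low F, \low G)$ and $\marcop_{\up\marphi, \up\marpsi}(\up F, \up G)$. Since the two bounding objects are compositions of copulas with univariate distribution functions they are themselves bivariate distribution functions, and (viii) gives $\low H \le \up H$, so $(\low H, \up H)$ is a bona fide bivariate $p$-box containing every such joint distribution. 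The only mild obstacle in the whole argument is isolating and stating the four-variable monotonicity cleanly; once this is done the remaining work is bookkeeping.
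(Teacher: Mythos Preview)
Your proposal is correct and aligns closely with the paper's own proof for items (i)--(viii): the paper too dispatches these by pointing to Lemma~\ref{lem-phi-order}, Proposition~\ref{prop-marshall-properties}, Proposition~\ref{prop-order}, and the monotonicity computation \eqref{eq-order-marcop}, just as you do (your citations are in fact occasionally sharper than the paper's, e.g.\ invoking Proposition~\ref{prop-independent-product-distribution} for (vii)).

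The one place where your route diverges is (ix). You argue by sandwiching: every precise joint distribution $\marcop_{\marphi,\marpsi}(F,G)$ arising from an admissible triple is, by the four-variable monotonicity together with Proposition~\ref{prop-order}, trapped between $\marcop_{\low\marphi,\low\marpsi}(\low F,\low G)$ and $\marcop_{\up\marphi,\up\marpsi}(\up F,\up G)$, and these bounds are themselves attained at the extremal triples. The paper instead computes $\low H(x,y)$ and $\up H(x,y)$ directly from the factorization property, obtaining $\low H(x,y)=\low F_X(x)\,\low F_Y(y)\,F_Z(\min\{x,y\})$ and then identifying this expression with $\marcop_{\low\marphi,\low\marpsi}(\low F(x),\low G(y))$. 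Your argument stays at the level of copulas and monotonicity and never unpacks the Marshall formula; the paper's argument ties the $p$-box bounds explicitly to the independence (factorization) assumption on $X,Y,Z$. Both are short and valid; the paper's version makes the probabilistic origin of $\low H,\up H$ transparent, while yours shows more cleanly why the bounds are sharp within the family of admissible precise models.
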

\begin{proof}
	(i) follows by Lemma~\ref{lem-phi-order}; (ii) follows directly from Proposition~\ref{prop-marshall-properties}; (iii) is a direct consequence of \eqref{eq-order-marcop} and (i); (iv) follows from Proposition~\ref{prop-marshall-properties}; (v) follows by definition; (vi) follows from Proposition~\ref{prop-order}; (vii) is a consequence of Proposition~\ref{prop-marshall-properties}; and (viii) follows from monotonicity of Marshall's copulas, (iii) and (vi).
	
	To prove (ix), let $(\low H, \up H)$ be the bivariate $p$-box describing the distribution of vector $(U, V)$. Using the factorization property we obtain:
	\begin{align}
		\low H(x, y) & = \low F_X(x) \low F_Y(y) F_Z(\min\{x, y \}) = \marcop_{\low{\marphi}, \low{\marpsi}}(\low F(x), \low G(y)); \label{eq-marlow}\\
		\up H(x, y) & = \up F_X(x) \up F_Y(y) F_Z(\min\{x, y \}) = \marcop_{\up{\marphi}, \up{\marpsi}}(\up F(x), \up G(y)).\label{eq-marup}
	\end{align}
\end{proof}
\begin{rem}
	Point (ix) of this theorem tells us that the ``first'' part of the imprecise version of Sklar's theorem does hold for the bivariate $p$-boxes representing shock models described by Marshall.
\end{rem}
\begin{ex}\label{ex-impr-marshall}
	Consider again Examples~\ref{ex-marcop} and \ref{ex-mmcop}, and suppose this time that we cannot assume precisely given parameters, but instead we consider the $p$-boxes $(\low F_X, \up F_X)$ and $(\low F_Y, \up F_Y)$, where $\low F(x)$ is an exponential distribution with parameter $\lambda$ and $\up F(x)$ with some parameter $\lambda' > \lambda$. It is immediate that $\low F < \up F$ holds. Similarly, let $\low F_Y$ and $\up F_Y$ be exponential with parameters $\eta < \eta'$ respectively. It is also easy to check that $\phi(u) \leqslant \phi'(u)$ where $\phi$ and $\phi'$ are given by \eqref{eq-ex-phi} and similarly, $\marpsi \leqslant \marpsi'$.
	
	So $\marcop_{\phi, \psi} \leqslant \marcop_{\phi', \psi'}$ and the bivariate distribution function of the vector $(U, V)$, where $U = \max\{ X, Z\}$ and $V=\max\{ Y, Z \}$ is then described with a bivariate $p$-box $(\low H, \up H)$, where the bounds are given by \eqref{eq-marlow} and \eqref{eq-marup}.
\end{ex}
We will now present the counterpart of our Marshall's model for the maxmin case. As before, let $(F_X, F_Y, F_Z)$ be a triple of distribution functions corresponding to independent random variables $X, Y$ and $Z$. We allow the distributions of $X$ and $Y$ to be given imprecisely in terms of $p$-boxes $(\low F_X, \up F_X)$ and $(\low F_Y, \up F_Y)$. We introduce the random variables $U = \max\{ X, Z\}$ and $W = \min\{ Y, Z \}$ and let $F$ and $K$ be their corresponding distribution functions. Furthermore, let $\mmcop_{\mmphi, \mmpsi}(F, K)$ denote the joint distribution function of the random vector $(U, W)$. In particular, let $\low \mmphi$ and $\low \mmpsi$ be the minimal functions associated to the triple $(\low F_X, \low F_Y, F_Z)$; and let $\up \mmphi$ and $\up \mmpsi$ be the maximal functions associated to the triple $(\up F_X, \up F_Y, F_Z)$. The distribution functions of random variables $U$ and $W$ corresponding to triples $(\low F_X, \low F_Y, F_Z)$ and $(\up F_X, \up F_Y, F_Z)$ will be denoted by $\low F, \low K$ and $\up F, \up K$ respectively.

\begin{thm}[Properties of imprecise maxmin copulas]\label{main:maxmin}
	In the above situation we have:
	\begin{enumerate}[(i)]
		\item $\low\mmphi\leqslant \up \mmphi$ and $\low \mmpsi\leqslant \up \mmpsi$;
		\item $\low \mmphi^* \circ \low F = \low \mmpsi_* \circ \low K$ and $\up \mmphi^* \circ \up F = \up \mmpsi_* \circ \up K$;
		\item $\mmcop_{\low{\mmphi}, \up{\mmpsi}} \leqslant \mmcop_{\mmphi, \mmpsi} \leqslant \mmcop_{\up{\mmphi}, \low{\mmpsi}}$ where $\marcop_{{\marphi}, {\marpsi}}$ is a maxmin copula corresponding to some triple $(F_X, F_Y, F_Z)$, where $F_X\in \mathcal F_{(\low F_X, \up F_X)}$ and $F_Y\in \mathcal F_{(\low F_Y, \up F_Y)}$;
		\item
		\begin{align*}
		\low F & = \low F_X F_Z & \low K & = \low F_Y + F_Z -\low F_Y F_Z \\
		\up F & = \up F_X F_Z & \up K & = \up F_Y + F_Z -\up F_Y F_Z
		\end{align*}
		\item
		\begin{align*}
		\low F_X(x) & = \low{\mmphi}(\low F(x)), \text{ if } \low F(x)>0; & \up F_X(x) & = \up{\mmphi}(\up F(x)), \text{ if } \up F(x)>0; \\
		\low F_Y(y) & = \low{\mmpsi}(\low K(y)), \text{ if } \low K(y)<1; & \up F_Y(y) & = \up{\mmpsi}(\up K(x)), \text{ if } \up K(y)<1.
		\end{align*}
		\item $\low F\leqslant \up F$ and $\low K\leqslant \up K$;
		\item The distributions of the random variables $U = \max\{ X, Z \}$ and $W=\min\{ Y, Z\}$ are described with the $p$-boxes $(\low F, \up F)$ and  $(\low K, \up K)$  respectively.
		\item $\mmcop_{\low{\mmphi}, \low{\mmpsi}}(\low F, \low K)\leqslant  \mmcop_{\up{\mmphi}, \up{\mmpsi}}(\up F, \up K)$;
		\item The joint distribution of $(U, V)$ is described by a bivariate $p$-box
		\begin{equation}\label{eq-maxmin-p-box}
		(\low H, \up H) = (\mmcop_{\low{\mmphi}, \low{\mmpsi}}(\low F, \low K), \mmcop_{\up{\mmphi}, \up{\mmpsi}}(\up F, \up K)).
		\end{equation}	
	\end{enumerate}
\end{thm}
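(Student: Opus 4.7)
The plan is to mirror the proof of Theorem~\ref{main:marshall}, systematically replacing each Marshall-case ingredient by its maxmin analogue, and then to handle (viii) and (ix) separately since they involve a nontrivial stochastic monotonicity that does not reduce to monotonicity of the copula formula in its generators.

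For the routine items I would argue as follows. Item (i) is immediate from Lemma~\ref{lem-phi-order} applied to the $\mmphi$ side and Lemma~\ref{lem-chi-order} applied to the $\mmpsi$ side. Item (ii) is Proposition~\ref{prop-maxmin-properties}(iv) applied to the extreme triples $(\low F_X, \low F_Y, F_Z)$ and $(\up F_X, \up F_Y, F_Z)$. Item (iii) combines (i) with Proposition~\ref{prop-maxmin-bounds}; note that $\mmcop_{\mmphi, \mmpsi}$ is increasing in $\mmphi$ but \emph{decreasing} in $\mmpsi$, which is why the lower bounding pair couples $\low\mmphi$ with $\up\mmpsi$ and the upper bounding pair couples $\up\mmphi$ with $\low\mmpsi$. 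Items (iv) and (v) follow from the definitions of $\low F, \up F, \low K, \up K$ together with Proposition~\ref{prop-maxmin-properties}(i)--(ii), and item (vi) from $\low F_X \leqslant \up F_X$, $\low F_Y \leqslant \up F_Y$ together with monotonicity of $F_X \mapsto F_X F_Z$ and $F_Y \mapsto F_Y + F_Z - F_Y F_Z$. Item (vii) is Proposition~\ref{prop-independent-product-distribution}.

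For (ix) I would argue directly via the stochastic representation. Since $X, Y, Z$ are independent, the event $\{U \leqslant x, W \leqslant y\}$ decomposes into the disjoint union of $\{X \leqslant x, Z \leqslant x, Y \leqslant y\}$ and $\{X \leqslant x, Z \leqslant \min\{x, y\}, Y > y\}$, so for any triple $(F_X, F_Y, F_Z)$ the joint distribution takes the explicit form
\[
H(x, y) = F_X(x)\bigl[F_Z(\min\{x, y\}) + F_Y(y)\bigl(F_Z(x) - F_Z(\min\{x, y\})\bigr)\bigr].
\]
By Proposition~\ref{prop-maxmin-properties}(iii) this coincides with $\mmcop_{\mmphi, \mmpsi}(F(x), K(y))$ for the generators associated to $(F_X, F_Y, F_Z)$, so substituting $(\low F_X, \low F_Y, F_Z)$ and $(\up F_X, \up F_Y, F_Z)$ produces the two expressions appearing in \eqref{eq-maxmin-p-box}.

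The main obstacle is (viii), the inequality $\mmcop_{\low\mmphi, \low\mmpsi}(\low F, \low K) \leqslant \mmcop_{\up\mmphi, \up\mmpsi}(\up F, \up K)$. A direct attempt through the copula formula fails because $\mmcop_{\mmphi, \mmpsi}$ is not monotone in $\mmpsi$, so one cannot simply combine (iii) with $\low F \leqslant \up F$ and $\low K \leqslant \up K$ as in the Marshall case. Instead I would read off from the explicit formula above that $H(x, y)$, viewed as an affine function of $F_X(x)$ and $F_Y(y)$ with $F_Z$ fixed, has non-negative slopes in both arguments, the one for $F_Y(y)$ being $F_X(x)\bigl(F_Z(x) - F_Z(\min\{x, y\})\bigr) \geqslant 0$. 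Consequently $H$ is jointly non-decreasing in $F_X$ and $F_Y$, and (viii) follows by passing from the lower to the upper marginals. This is the compensation mechanism highlighted earlier: the reversal of order caused by the \emph{min} linkage on $F_Y$ is undone by the corresponding reversal occurring when passing from $(\low F_Y, \up F_Y)$ to $(\low K, \up K)$.
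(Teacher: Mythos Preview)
Your proposal is correct and follows essentially the same route as the paper. The only difference is organizational: the paper observes that (ix) implies (viii) and therefore proves (ix) directly via the explicit formula for $H(x,y)$ and its monotonicity in $F_X, F_Y$, whereas you place the monotonicity argument under (viii) and treat (ix) as the identification of the boundary expressions; either way the same computation carries both items, and your unified formula $H(x,y)=F_X(x)\bigl[F_Z(\min\{x,y\})+F_Y(y)(F_Z(x)-F_Z(\min\{x,y\}))\bigr]$ is exactly the case split the paper quotes from \cite{omladic2016}.
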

\begin{proof}
	(i) follows from Lemmas~\ref{lem-phi-order} and \ref{lem-chi-order};	(ii) is a direct consequence of Proposition~\ref{prop-marshall-properties}; (iii) Follows from Proposition~\ref{prop-maxmin-bounds} and (i); (iv) and (v) follow from definitions and Proposition~\ref{prop-order}; (vi) follows from Proposition~\ref{prop-order} and together with Proposition~\ref{prop-maxmin-properties} implies (vii).
	
	Clearly, (ix) implies (viii). Therefore we prove (ix) directly. It has been shown in \cite{omladic2016} that the joint distribution function $H$ for $(U, V)$ has the form
	\begin{equation*}
	H(x, y) = \begin{cases}
	F_X(x)F_Z(x) & x\leqslant  y \\
	F_X(x)[F_Z(y)+F_Y(y)(F_Z(x)-F_Z(y))] & x\geqslant  y
	\end{cases}
	\end{equation*}
	The first part is clearly minimized by taking $F_X = \low F_X$ and in the second part, $x\geqslant  y$ implies that $F_Z(x)-F_Z(y)\geqslant  0$, and therefore this part is minimized by taking $\low F_X$ and $\low F_Y$ in place of $F_X$ and $F_Y$ respectively. Thus implying that $\low H(x, y) = \mmcop_{\low{\mmphi}, \low{\mmpsi}}(\low F, \low K)$. The proof for the upper bound is identical.
\end{proof}

\begin{ex}\label{ex-impr-maxmin}
	Suppose again that the distributions for $X$ and $Y$ are given in terms of $p$-boxes $(\low F_X, \up F_X)$ and $(\low F_Y, \up F_Y)$, where $\low F(x)$ is an exponential distribution with parameter $\lambda$ and $\up F(x)$ with some parameter $\lambda' > \lambda$, and similarly $\low F_Y$ and $\up F_Y$ exponential distributions with parameters $\eta < \eta'$ respectively. Additionally we we now have that $\mmpsi \leqslant \mmpsi'$. The joint distribution of the vector $(U, W)$ with components $U = \max\{ X, Z\}$ and $W=\min\{Y, Z\}$ is again given by $p$-box \eqref{eq-maxmin-p-box}.
\end{ex}
\begin{rem}\label{rem1}
	The last theorem shows that the bivariate distribution given by the bivariate $p$-box \eqref{eq-maxmin-p-box} is not the right one satisfying the imprecise version of Sklar's theorem. Indeed, if this were so, we would need to consider the boundary copulas applied to the boundary marginals
	\begin{equation*}
	(\mmcop_{\low{\mmphi}, \up{\mmpsi}}(\low F, \low K), \mmcop_{\up{\mmphi}, \low{\mmpsi}}(\up F, \up K)),
	\end{equation*}
	which however only gives outer bounds for the bivariate $p$-box \eqref{eq-maxmin-p-box}. According to Proposition~\ref{prop-maxmin-bounds}, the bounds are in general loose.
\end{rem}
\begin{rem}\label{rem2}
  Note, moreover, that inequality (viii) of the last proposition does not even imply that $\mmcop_{\low{\mmphi}, \low{\mmpsi}} \leqslant \mmcop_{\up{\mmphi}, \up{\mmpsi}}$. We have evidence of this fact provided by some examples, perhaps too computationally elaborate to be given here, and would also be somewhat off topic. So, although the pair $(\mmcop_{\low{\mmphi}, \low{\mmpsi}} ,\mmcop_{\up{\mmphi}, \up{\mmpsi}})$ is in general not an imprecise copula in the sense of \citet{montes2015}, it satisfies the same Equation \eqref{eq-maxmin-p-box} as the imprecise copula  existence of which is given by the imprecise version of the Sklar's theorem.
\end{rem}

\bibliographystyle{plain}

\begin{thebibliography}{}

\bibitem{AnKoTa} U.\ U.\ dos Anjos, N.\ Kolev, N.\ I.\ Tanaka, \textsl{Copula associated to order statistics}, Brazilian Journal of Probability and Statistics, \textbf{19} (2015).

\bibitem[\protect\citename{Augustin {\em et~al.},
  }2014]{augustin2014introduction}
T. Augustin, F. P. A. Coolen, G. de~Cooman, M. C. M. Troffaes,
  2014.
\newblock {\em Introduction to imprecise probabilities}.
\newblock John Wiley \& Sons.


\bibitem{AvGeKo} J.\ Av\'erous, C.\ Genest, S.\ C.\ Kochar, \textsl{On the dependence structure of order statistics}, Journal of Multivariate Analysis, \textbf{94} (2005), 159--171

\bibitem{BeBoCiSaPlSa} L.\ B{\v{e}}hounek, U.\ Bodenhofer, P.\ Cintula, S.\ Saminger-Platz, P.\ Sarkoci, \textsl{Graded dominance and related graded properties of fuzzy connectives}, Fuzzy Sets and Systems, \textbf{262} (2015), 78--101.

\bibitem{ChDuMu} U.\ Cherubini, F.\ Durante, S. Mulinacci, (eds.), \textsl{Marshall--{O}lkin {D}istributions -- {A}dvances in {T}heory and {A}pplications}, Springer Proceedings in Mathematics \& Statistics, Springer International Publishing, 2015.

\bibitem{ChMu}U.\ Cherubini, S.\ Mulinacci, \emph{Systemic risk with exchangeable contagion: application to the European banking system}, ArXiv e-prints, 2015.

\bibitem{Coolen2004} F.\ P.\ A.Coolen, \textsl{On the Use of Imprecise Probabilities in Reliability}, Quality and Reliability Engineering International. \textbf{20} (2004), 193--202.

\bibitem[\protect\citename{de~Cooman {\em et~al.}, }2009]{decooman-2008-a}
G. de~Cooman, F. Hermans, E. Quaeghebeur,
\newblock Imprecise {M}arkov chains and their limit behavior.
\newblock {\em Probability in the Engineering and Informational Sciences}, {\bf
  23.4}(2009), 597--635.

\bibitem{Couso2000} I.\ Couso, S.\ Moral, P.\ Walley, \textsl{A survey of concepts of independence for imprecise probabilities}, Risk, Decision and Policy, \textbf{5} (2000), 165--181.

\bibitem{Couso2010} I.\ Couso, S.\ Moral, \textsl{Independence concepts in evidence theory}, International Journal of Approximate Reasoning, \textbf{51} (2010), 748--758.

\bibitem{DuFeSaUbFl} F.~Durante, J.\ Fern\'andez S\'anchez, M.\ \'Ubeda Flores, \textsl{Bivariate copulas generated by perturbarion}, Fuzzy Sets and Systems, \textbf{228} (2013), 137--144.

\bibitem{DuGiMa1} F.~Durante, S.\ Girard, G.\ Mazo, \textsl{Copulas Based on Marshall-Olkin Machinery}, Chapter 2 in: U. Cherubini, F. Durante, S. Mulinacci (Eds.), Marshall-Olkin Distributions -- Advances in Theory and Practice, in: Springer Proceedings in Mathematics \& Statistics, Springer, 2015, pp. 15--31.

\bibitem{DuGiMa} F.~Durante, S.\ Girard, G.\ Mazo, \textsl{{M}arshall--{O}lkin type copulas generated by a global shock}, J.\ Comput.\ Appl.\ Math., \textbf{296} (2016), 638--648.

\bibitem{DuJa} F.\ Durante, P.\ Jaworski, \textsl{A new characterization of bivariate copulas}, Comm.\ in Stats.\ Theory and Methods, \textbf{39} (2010), 2901--2912.

\bibitem{DuKoMeSe} F.~Durante, A.~Kolesarov\`{a}, R.~Mesiar, C.~Sempi, \textsl{Semilinear copulas}, Fuzzy Sets and Systems, \textbf{159} (2008), 63--76.

\bibitem{DuMePaSe} F.~Durante, R.\ Mesiar, P.\ L.\ Papini, C.~Sempi, \textsl{2-Increasing binary aggregation operators}, Information Sciences, \textbf{177} (2007), 111--129.

\bibitem{DuOk} F. Durante and O. Okhrin. \textit{Estimation procedures for exchangeable Marshall copulas with hydrological application.} Stoch. Environ. Res. Risk Assess., \textbf{29} (2015), 205--226.

\bibitem{DuOmOrRu} F.~Durante, M.\ Omladi\v{c}, L.\ Ora\v{z}em, N.\ Ru\v{z}i\'{c}, \textsl{Shock models with dependence and asymmetric linkages}, Fuzzy Sets and Systems, \textbf{323} (2017), 152--168.

\bibitem{DuSe} F.~Durante, C.~Sempi, Principles of Copula Theory, CRC/Chapman \& Hall, Boca Raton (2015).

\bibitem{Ferson2003} S.\ Ferson, V.\ Kreinovich, L.\ Ginzburg, D.\ S.\ Myers, K.\ Sentz, \textsl{Constructing probability boxes and Dempster-Shafer structures}. Technical report SAND2002-4015, 2003.

\bibitem{FrNe} G.\ A.\ Fredricks, R.\ B.\ Nelsen, \textsl{On the relationship between Spearman’s rho and Kendall’s tau for pairs of continuous random variables}, Journal of Statistical Planning and Inference \textbf{137} (2007), 2143--2150


\bibitem{GeNe} C.\ Genest, J.\ Ne\v{s}lehov\'a, \textsl{Assessing and Modeling Asymmetry in Bivariate Continuous Data}, In: P.\ Jaworski, F.\ Durante, W.K.\ H\"ardle, (eds.), {C}opulae in {M}athematical and {Q}uantitative {F}inance, Lecture Notes in Statistics, Springer Berlin Heidelberg, (2013), 152--16891--114.

\bibitem{Hu} T.\ E.\ Huillet, \textsl{Stochastic species abundance models involving special copulas}, Phisica A, \textbf{490} (2018) 77--91

\bibitem{Jansen2018} C.\ Jansen, G.\ Schollmeyer, T.\ Augustin, \textsl{Concepts for decision making under severe uncertainty with partial ordinal and partial cardinal preferences}, International Journal of Approximate Reasoning, \textbf{98} (2018), 112--131.

\bibitem{JaRy} P.\ Jaworski, T.\ Rychlik, \textsl{On distributions of order statistics for absolutely continuous copulas with applications to reliability}, Kybernetika, \textbf{6} (2008), 757--776.

\bibitem{Joe} H.\ Joe, Dependence Modeling with Copulas,Chapman \& Hall/CRC, London (2014).

\bibitem{JwBaDeMe14}  T.\ Jwaid, B.\ De Baets, H. De Meyer, \textsl{Ortholinear and paralinear semi-copulas}, Fuzzy Sets and Systems, \textbf{252} (2014), 76--98.

\bibitem{JwBaDeMe15}  T.\ Jwaid, B.\ De Baets, H. De Meyer, \textsl{Semiquadratic copulas based on horizontal and vertical interpolation}, Fuzzy Sets and Systems, \textbf{264} (2015), 3--21.

\bibitem{KlLiMePa}  E.\ P.\ Klement, J. Li, R. Mesiar, E. Pap, \textsl{Integrals based on monotone set functions}, Fuzzy Sets and Systems, \textbf{281} (2015), 3--21.

\bibitem{KlMeSpSt} E.\ P.\ Klement, R.\ Mesiar, F.\ Spizzichino, A.\ Stup{\v{n}}anov{\'a}, \textsl{Universal integrals based on copulas}, Fuzzy Optim.\ Decis.\ Mak., \textbf{13}, No.\ 3, (2014), 273--286.

\bibitem{KoBuKoMoOm1} D.\ Kokol Bukov\v{s}ek, T.\ Ko\v{s}ir, B.\ Moj\v{s}kerc, M.\ Omladi\v{c}, \emph{Non-exchangeability of copulas arising from shock models}, Journal of Computational and Applied Mathematics, \textbf{358} (2019),  61--83.

\bibitem{KoBuKoMoOm2} D.\ Kokol Bukov\v{s}ek, T.\ Ko\v{s}ir, B.\ Moj\v{s}kerc, M.\ Omladi\v{c}, \emph{Asymmetric linkages: maxmin vs. reflected maxmin copulas}, arXiv:1808.07737.

\bibitem{KoOm} T.\ Ko\v{s}ir, M.\ Omladi\v{c}, \textsl{Reflected maxmin copulas and modelling quadrant subindependence}, accepted in Fuzzy sets and systems, arXiv:1808.07646.

\bibitem{LiMcNe} F.\ Lindskog, A.\ J.\ McNeil, \textsl{Common Poisson Shock Models: Applications to insurance and credit risk modelling}, ASTIN Bulletin, \textbf{33}, No. 2, (2003), 209--238.

\bibitem{LiSoZh} J.\ Liu, B.\ Song, Y.\ Zhang, \textsl{Competing failure model for mechanical system with multiple functional failures}, Advances in Mechanical Engineering, \textbf{10}, 2018, 1--16.

\bibitem{LiZhSo} J.\ Liu, Y.\ Zhang, B.\ Song,  \textsl{Reliability modeling for competing failure systems with instant-shift hard failure threshold}, Transactions of the Canadian Society for Mechanical Engineering, \textbf{42}, 2018, 457--467.

\bibitem[\protect\citename{Marshall, }1996]{marshall1996}
A. W. Marshall.
\newblock Copulas, Marginals, and Joint Distributions.
\newblock {\em Lecture Notes-Monograph Series}, {\bf 28} (1996), 213--222.

\bibitem{MaOl} A.~W.~Marshall, I.~Olkin, \textsl{A multivariate exponential distributions}, J.~Amer.~Stat.~Assoc., \textbf{62}, (1967), 30--44.

\bibitem{MeSa} B.\ V.\ de Melo Mendes and  M.\ A\ .Sanfins, \textsl{The limiting copula of the two largest order statistics of independent and identically distributed samples}, Brazilian Journal of Probability and Statistics, \textbf{21} (2007), 85--101.

\bibitem{MeKoKo}  R.\ Mesiar, M.\ Komorn\'{i}kov\'{a}, J.\ Komorn\'{i}k, \textsl{Perturbation of bivariate copulas}, Fuzzy Sets and Systems, \textbf{268} (2015), 127--140.

\bibitem{MirandaMontes18} E.\ Miranda, I.\ Montes, \textsl{Shapley and Banzhaf values as probability transformations}, International Journal of Uncertainty, Fuzziness and Knowledge-Based Systems. \textbf{26} (2018), 917--947.

\bibitem{Montes2014} I.\ Montes, E.\ Miranda, S.\ Montes, \textsl{Decision making with imprecise probabilities and utilities by means of statistical preference and stochastic dominance}. European Journal of Operational Research, \textbf{234} (2014), 209--220.

\bibitem{Nau2011} R.\ Nau, \textsl{Imprecise probabilities in Non-cooperative games}. In Proceedings of ISIPTA 2011.

\bibitem[\protect\citename{Montes {\em et~al.}, }2015]{montes2015}
I. Montes, E. Miranda, R. Pelessoni, P. Vicig.
\newblock Sklar's theorem in an imprecise setting.
\newblock {\em Fuzzy Sets and Systems}, {\bf 278} (2015), 48 -- 66.
\newblock Special Issue on uncertainty and imprecision modelling in decision
  making (EUROFUSE 2013).

\bibitem{Mu} S.\ Mulinacci. \emph{Archimedean-based Marshall-Olkin distributions and related dependence structures}, Methodol.\ Comput.\ Appl.\ Probab., {\bf 20} (2018), 205--236.

\bibitem{NaSp} J.\ Navarro, F.\ Spizzichino, \textsl{On the relationships between copulas of order statistics and marginal distributions}, Statistics \& Probability Letters, \textbf{80} (2010), 473--479.

\bibitem{Nels} R.\ B.\ Nelsen, An introduction to copulas, 2nd edition, Springer-Verlag, New York (2006).

\bibitem{Oberguggenberger2009} M.\ Oberguggenberger, J.\ King, B.\ Schmelzer, \textsl{Classical and imprecise probability methods for sensitivity analysis in engineering: a case study}, International Journal of Approximate Reasoning, \textbf{50} (2009), 680--693.

\bibitem[\protect\citename{Omladi\v{c}, Ru\v{z}i\'{c}, }2016]{omladic2016}
M. Omladi\v{c}, N. Ru\v{z}i\'{c}.
\newblock Shock models with recovery option via the maxmin copulas.
\newblock {\em Fuzzy Sets and Systems}, {\bf 284} (2016), 113 -- 128.
\newblock Theme: Uncertainty and Copulas.

\bibitem{OmSt} M. Omladi\v{c}, N. Stopar, \textsl{Final solution to the problem of relating a true copula to an imprecise copula}, Fuzzy sets and systems, 2019. [DOI information: 10.1016/j.fss.2019.07.002]

\bibitem{OmSt2} M. Omladi\v{c}, N. Stopar, \textsl{A full scale Sklar's theorem in the imprecise setting}, preprint.

\bibitem{Pelessoni2003} R.\ Pelessoni, P.\ Vicig, \textsl{Convex Imprecise Previsions}, Reliable Computing, \textbf{9} (2003), 465--485.

\bibitem[\protect\citename{Pelessoni {\em et~al.}, }2016]{pelessoni2016}
R. Pelessoni, P. Vicig, I. Montes, E. Miranda,
\newblock Bivariate p-boxes.
\newblock {\em International Journal of Uncertainty, Fuzziness and
  Knowledge-Based Systems}, {\bf 24.02} (2016), 229--263.

\bibitem{RoLaUbFl}  J.\ A.\ Rodr\'iguez-Lallena, M.\ \'Ubeda-Flores, \textsl{A new class of bivariate copulas}, Stat.\ \& Probab.\  Lett., \textbf{66} (2004), 315--325.

\bibitem{Schmelzer2015} B.\ Schmelzer, \textsl{Joint distributions of random sets and their relation to copulas}, International Journal of Approximate Reasoning, \textbf{65} (2015), 59--69.

\bibitem{Schmelzer2015b} B.\ Schmelzer, \textsl{Sklar's theorem for minitive belief functions}, International Journal of Approximate Reasoning, \textbf{63} (2015), 48--61.

\bibitem{Schmelzer2018} B.\ Schmelzer, \textsl{Multivariate capacity functional vs. capacity functionals on product spaces}. Fuzzy Sets and Systems, \textbf{364} (2019), 1--35.


\bibitem{Schm} V.\ Schmitz, \textsl{Revealing the dependence structure between $X_{(1)}$ and $X_{(n)}$}, Journal of Statistical Planning and Inference, \textbf{123} (2004), 41--47.


\bibitem{Skla} A.\ Sklar, Fonctions de r\'{e}partition \`{a} $n$ dimensions et leurs marges, Publ.\ Inst.\ Stat.\ Univ.\ Paris \textbf{8} (1959) 229--231.

\bibitem[\protect\citename{\v{S}kulj, }2009]{skulj:09IJAR}
D.\ \v{S}kulj,
\newblock Discrete time {M}arkov chains with interval probabilities.
\newblock {\em International Journal of Approximate Reasoning}, {\bf 50.9} (2009),
  1314--1329.

\bibitem{Troffaes2007} M.\ C.\ M.\ Troffaes, \textsl{Decision making under uncertainty using imprecise probabilities}, International Journal of Approximate Reasoning, \textbf{45} (2007), 17--29.

\bibitem{Troffaes2001} M.\ C.\ M.\ Troffaes, S. Destercke \textsl{Probability boxes on totally preordered spaces for multivariate modelling}, International Journal of Approximate Reasoning, \textbf{52} (2011), 767--791.

\bibitem{Troffaes2013} M.\ C.\ M.\ Troffaes, G.\ Walter, D.\ Kelly, \textsl{A robust Bayesian approach to modeling epistemic uncertainty in common-cause failure models}, Reliability Engineering \& System Safety, \textbf{125} (2014), 13--21.

\bibitem{utkin2007} L.\ V.\ Utkin, F.\ P.\ A.\ Coolen, \textsl{Imprecise Reliability: An Introductory Overview.} In: Levitin G. (eds) Computational Intelligence in Reliability Engineering. Studies in Computational Intelligence, \textbf{40} (2007). Springer, Berlin, Heidelberg.

\bibitem{Vicig2008} P.\ Vicig, \textsl{Financial risk measurement with imprecise probabilities}, International Journal of Approximate Reasoning, \textbf{49} (2008), 159--174.

\bibitem{walley91} P. \ Walley, Statistical Reasoning with Imprecise Probabilities. Chapman and Hall, London, 1991.

\bibitem{math} Wolfram Research, Inc. Mathematica, Version {\bf 11}, Champaign, IL, 2017.

\bibitem{Yu2016} L.\ Yu, S.\ Destercke, M.\ Sallak, W.\ Schon, \textsl{Comparing system reliability with ill-known probabilities}, Proceedings of IPMU 2016, 619--629.



\end{thebibliography}

\section{Conclusion}

In this paper we presented a possible approach to shock model copulas in the imprecise setting. We have showed that if we want to keep a valid stochastic interpretation of the models involved we should not introduce the $p$-box kind of ordering on the level of copulas, it should be introduced in a deeper layer on the level of shocks. However, there is a hard to control interplay between the global and local shocks so that a clear interpretation of order can be followed only in one direction: either for local shocks or for global shocks. We decided to give here the development of the theory in which local shocks are imprecise and global shocks are precise. We believe that a theory of roughly equivalent complexity could be obtained if we studied the setting in which the global shocks are imprecise and the local ones are precise.

It might be quite interesting and definitely worth doing to treat both kind of shocks in an imprecise way simultaneously. It is not clear how to combine the orders on distributions of these two kinds of shocks when they come into an interplay during their action on components (cf.\ Subsection \ref{subsec:stochastic}). We leave this challenge for further study.

There are many more questions that are worth to be considered in the future investigations that this paper is opening. On one hand our imprecise versions of shock model induced copulas resulted in two constructions of imprecise copulas in the sense of Montes et al.\ \cite{montes2015} that satisfy an additional Condition (C) as seen in Corollaries \ref{cormar}\&\ref{cormm}. (Recall that this condition has recently been shown \cite{OmSt} truly additional.) On the other hand, according to Remarks \ref{rem1}\&\ref{rem2}, we found a pair of copulas that give some interesting imprecise information on the problem, i.e.\ Equation \eqref{eq-maxmin-p-box}, but does not satisfy the definition of an imprecise copula of Montes et al.\ \cite{montes2015} and neither it satisfies the version of the imprecise Sklar's theorem presented there. Definitely an observation that deserves further exploration,

\section*{Acknowledgements}
\begin{enumerate}
	\item The authors are grateful to prof.\ Bla\v{z} Moj\v{s}kerc for his help with figures of stochastic interpretation of shock model induced copulas.
	\item The authors are grateful to the two anonymous referees for careful readings of previous versions of this paper and for many valuable suggestions.
	\item Damjan \v Skulj acknowledges the financial support from the Slovenian Research Agency (research core funding No. P5-0168).
	\item Matja\v z Omladi\v c acknowledges the financial support from the Slovenian Research Agency (research core funding No. P1-0222).	
\end{enumerate}

\end{document}